\newtheorem{definition}{Definition}[section]
\newtheorem{proposition}[definition]{Proposition}
\newtheorem{observation}[definition]{Observation}
\newtheorem{remark}[definition]{Remark}
\newtheorem{lemma}[definition]{Lemma}
\newtheorem{corollary}[definition]{Corollary}
\newtheorem{theorem}[definition]{Theorem}
\newcommand{\nc}{\newcommand}
\nc{\store}{\rotatebox[origin=c]{90}{$\thickspace\eqslantless\thickspace$}}
\nc{\stor}{\thickspace\store\thickspace}
\nc{\ST}{\mathsf{ST}}
\nc{\tbart}{\thickspace|\thickspace}
\nc{\Po}{\mathscr{P}}
\nc{\M}{\mathbb{M}}
\nc{\N}{\mathbb{N}}
\nc{\Mod}{\mathscr{M}}
\nc{\llr}{\thickspace\Longleftrightarrow\thickspace}
\renewcommand{\subset}{\subseteq}
\renewcommand{\phi}{\varphi}
\nc{\props}{(p_i)_{i \in I}}
\nc{\Props}{(P_i)_{i \in I}}
\nc{\KM}{\mathcal{K}(\M)}
\nc{\INQML}{\mathrm{InqML}}
\nc{\FO}{\mathrm{FO}}
\nc{\MSO}{\mathrm{MSO}}
\nc{\ML}{\mathrm{ML}}
\nc{\eqinqml}{\equiv_{\INQML}}
\nc{\eqml}{\equiv_{\ML}}
\nc{\eqinqmlbulk}{\equiv^{bulk}_{\INQML}}
\nc{\tupw}{\textbf{w}}
\nc{\tupx}{\textbf{x}}
\nc{\tupy}{\textbf{y}}
\nc{\tuppy}{\textbf{y}_{\psi}}
\nc{\tuppxphi}{\textbf{x}_{\phi}}
\nc{\tuppwphi}{\textbf{w}_{\phi}}
\nc{\supw}{\{\textbf{w}\}}
\nc{\suppxphi}{\{\textbf{x}_{\phi}\}}
\nc{\suppwphi}{\{\textbf{w}_{\phi}\}}
\nc{\hnt}{\rule{0em}{3ex}}
\newenvironment{romanenumerate}
{\begin{list}{(\roman{enumi})}{\usecounter{enumi}
\setlength{\labelwidth}{2cm}
\setlength{\itemindent}{0pt}
\setlength{\itemsep}{0.5\itemsep}
\setlength{\topsep}{\itemsep}
\setlength{\parsep}{0pt}
}}{\end{list}}
\nc{\bre}{\begin{romanenumerate}}
\nc{\ere}{\end{romanenumerate}}
\newenvironment{alphaenumerate}
{\begin{list}{(\alph{enumii})}{\usecounter{enumii}
\setlength{\labelwidth}{2cm}
\setlength{\itemindent}{0pt}
\setlength{\itemsep}{0.5\itemsep}
\setlength{\topsep}{\itemsep}
\setlength{\parsep}{0pt}
}}{\end{list}}
\nc{\bae}{\begin{alphaenumerate}}
\nc{\eae}{\end{alphaenumerate}}
\begin{document}

\title{A First-Order Framework for Inquisitive Modal Logic}
\author{Silke Mei\ss ner and Martin Otto\\
Department of Mathematics\\
Technische Universit\"at Darmstadt}
\date{April~2021}
\maketitle

\begin{abstract}
We present a natural standard translation 
of inquisitive modal logic $\INQML$ into 
first-order logic over the natural two-sorted relational 
representations of the intended models, 
which captures the built-in higher-order features of $\INQML$.
This translation is based on a graded notion of 
flatness that ties the inherent second-order,  
team-semantic features of $\INQML$ over information
states to subsets or tuples of bounded size. 
A natural notion of pseudo-models, which relaxes the
non-elementary constraints on the intended models, 
gives rise to an elementary, purely model-theoretic proof 
of the compactness property for $\INQML$. 
Moreover, we prove a Hennessy-Milner theorem
for $\INQML$, which crucially uses 
$\omega$-saturated pseudo-models and the new
standard translation. 
As corollaries we also obtain van Benthem style characterisation theorems.
\end{abstract}

\section{Introduction}
\label{introsec}

\emph{Inquisitive logics} have recently been expounded systematically
by Ciardelli in~\cite{Ciardelli}, following up on previous work with
Roelofsen~\cite{CiardelliRoelofsen} and earlier sources especially in
the work of Groenendijk cited there. 
The fundamental
motivation is to provide logics with expressive means to deal
not just with assertions but also with questions. While the general 
programme can be carried out systematically for various logics,
like propositional logic in \cite{CiardelliRoelofsen} and first-order
logic in \cite{Ciardelli, CiardelliGrilletti, Grilletti2}, 
it certainly seems particularly
natural also at the level of modal logics, as outlined
in~\cite{Ciardelli},  where Ciardelli gives a first detailed account
of inquisitive modal logic in 2016.  In its epistemic interpretation, for instance, 
modal logic offers the natural classical framework for distinctions between 
different states of affairs (facts, about which basic assertions can be made) 
and cognitive states (information states, about which more complex
assertions, e.g.\ concerning knowledge, can be made). This is a very
natural context in which one may want to give semantics also to
questions. The study of questions is more generally well motivated
-- also at the more foundational philosophical or linguistic level -- 
by considerations about language and logic in all kinds of scenarios that
relate facts, knowledge and information. For instance one may want to
account for the conceptual difference between `\emph{knowing that}'
and `\emph{knowing whether}' something is the case.
And indeed, inquisitive modal logic provides connectives and modal
operators that neatly capture such distinctions; in particular it also
offers, right at the propositional level, a non-trivial disjunction of
alternatives $p$ and $\neg p$, whose semantics captures the idea of
the question `\emph{whether~$p$}', which is suggestively denoted as $?p$.
This novel formula $?p$ is meant to specify, as a kind of  $p/\neg p$ alternative, 
information states that support one of the admissible answers, but
crucially without specifying which one. 
The semantics for inquisitive modal logic is given in terms of
satisfaction of formulae in information states 
(\emph{support semantics} in~\cite{CiardelliRoelofsen,Ciardelli}),
i.e.\ in sets of possible worlds rather than in individual possible worlds. 
This latter feature also accounts for the conceptual 
links between the semantics of inquisitive logics and \emph{team semantics}
for logics of dependence~\cite{Hodges,Vaananen}.
Not too surprisingly maybe,
the semantic modelling for such phenomena in a modal framework 
involves not just possible 
worlds and relations between them (as is the case for basic modal logic) 
but includes information states as primary objects, together with
relationships between (sets of) possible worlds and (sets of) information states. 

In this sense, the setting of inquisitive modal logic puts an 
extra level of (set-theoretic)
complexity on top of the familiar modal modelling 
(cf.~\cite{MLtextbook,GorankoOtto}).
For instance, in the
epistemic setting where possible worlds are associated with 
\emph{states of affairs}:
where Kripke models assign to possible worlds 
sets of alternative possible worlds (\emph{information states}),
inquisitive models assign to possible worlds sets of such 
possible information states
(\emph{inquisitive states}), which may be thought of as possible
answers, or possible information updates. Correspondingly,
natural relational encodings involve both the set of worlds
and a set of information states, as two relevant sorts on an equal
footing. The way these sorts live in a base set and its power set
already suggests a degree of logical complexity that might 
be more of a challenge for a direct first-order account of the semantics of
$\INQML$ than the well-known standard translation for basic modal logic $\ML$.
Indeed, the relational encodings of the intended inquisitive models
form a non-elementary class,
due to non-elementary closure conditions on the universe of information states. 
So, for instance, the straightforward compactness
argument for $\ML$, which just applies first-order compactness through
standard translation, cannot be used directly.

From a model-theoretic perspective, translating a non-classical logic like $\INQML$ into
the classical framework of first-order logic $\FO$ 
provides a systematic advantage. With a standard
translation we can investigate $\INQML$ in the well-known $\FO$ setting and 
explore model-theoretic features of $\INQML$, such as
compactness, in this context.

Compared to previous translations as used
in~\cite{CiardelliOtto,CiardelliOttoLong},  the one we present
here combines several advantages: it puts minimal requirements on the relational
encodings of the inquisitive models; it is more directly defined 
by natural induction on the full syntax of $\INQML$ and is consequently
more intuitive. 
At the technical level, the main novelty is an application of a
concept we call \emph{graded flatness}. The same concept was previously
considered in (first-order) dependence logic in \cite{Kontinen} and in
the context of inquisitive first-order logic in \cite{Grilletti} under
the name of \emph{coherence}, but its application in the modal
context seems to represent an innovation.

At a conceptual level, our treatment may also suggest the 
relaxed class of models, which we call \emph{pseudo-models} and whose
relational counterparts form an elementary class, as an alternative to
the intended models for $\INQML$, whose relational counterparts form a
non-elementary class. Somewhat surprisingly, pseudo-models 
faithfully reflect some of the most salient logical features.
Analogous ideas appear to have been considered 
in neighbourhood semantics~\cite{Pacuit}, 
where monotonic models do not translate into an elementary class, while 
general neighbourhood models do and in some 
connections can be used in the analysis of the 
former~\cite{HansenKupkePacuit}.
But uses of intermediate classes of `weak' models 
have also more classically been explored, e.g.\ 
in the context of monadic second-order logic or of
topological model theory~\cite{Ziegler}.    
We exemplify the value of pseudo-models and our standard translation 
as intermediaries in model-theoretic arguments in two separate
applications. 

The first application is a new, purely model-theoretic argument to 
establish compactness for $\INQML$.\footnote{$\INQML$ compactness is
known from~\cite{Ciardelli}, where it arises as a corollary to a completeness result.}
In the classical case for basic model $\ML$, the standard translation into $\FO$ suffices to 
put $\ML$ into the elementary setting of Kripke structures as 
simple relational structures. The case of $\INQML$ needs to appeal to
pseudo-models because the relational counterparts of the intended
models do not themselves form an elementary class. 

The second application yields a Hennessy-Milner theorem for $\INQML$
w.r.t.\ the natural notion of inquisitive bisimulation studied
in~\cite{CiardelliOtto}. Here the intended models, which support the
Hennessy-Milner property, are obtained from $\omega$-sa\-tu\-ra\-ted
pseudo-models via a 
non-elementary closure operation. Incidentally, $\omega$-saturation 
is also shown to be incompatible with the closure requirements on
actual models, which again highlight the crucial r\^ole of pseudo-models  
as intermediaries. 

Finally we obtain variants of the van~Benthem
style expressive completeness result for $\INQML$
from~\cite{CiardelliOtto} 
(for both, the world- and the state-pointed case) 
as a corollary to the combination
of the compactness and Hennessy-Milner assertions. It is different from
the results in~\cite{CiardelliOtto} in 
referring to a natural strengthening of bisimulation invariance that
bridges the gap between relational models and pseudo-models; it 
is also not applicable in the context of finite model theory.

\medskip
Our choice of terminology is meant to make contact with team
semantic notions to reflect the close relationship between the
team semantic treatment of dependence logics \cite{Vaananen}
and the setting of $\INQML$. Sets of worlds (i.e.~information states) 
can be seen as teams, and in our two-sorted view are treated as 
first-class objects 
(of the second sort) along with worlds (as objects of the first sort).

\section{Inquisitive modal logic}
\label{inqmlsec}

By $(p_i)_{i \in I}$
we denote a set of propositional variables.
Following the terminology in~\cite{CiardelliRoelofsen,Ciardelli}
we associate the following two kinds of \emph{states} 
with a given non-empty set $W$ 
of \emph{possible worlds}.
\\
\emph{Information states}: 
any subset $s\subset W$ is called an \emph{information state};
the set of information states over $W$ is $\Po(W)$.
\\
\emph{Inquisitive states}:
a non-empty set $\Pi$ of information states,  
$\Pi \subset \Po(W)$,
is an \emph{inquisitive state} if it is closed w.r.t.\  set
inclusion: $s \in \Pi$ implies $t \in \Pi$ for all $t \subset s$;
the set of inquisitive states over $W$ is the set of 
those non-empty sets of information states in $\Po(\Po(W))$ that 
satisfy this characteristic \emph{downward closure} condition.
	
\begin{definition}[(inquisitive, modal) models]
\label{inqmm}\mbox{}\\
Let $W$ be a set of possible worlds and 
$\Sigma: W\rightarrow\Po(\Po(W)) \setminus \{
\emptyset\}$ be a function that assigns 
an inquisitive state $\Sigma(w)$ to every world $w\in W$ (an
inquisitive assignment) and  
$V: \props\rightarrow \Po(W)$ 
a function that assigns a subset of $W$ to every propositional variable 
(a propositional assignment). 
Then $\M=(W, \Sigma, V)$ is called an \emph{(inquisitive modal) model}.
\\
With an inquisitive model $\M= (W,\Sigma,V)$
we associate an induced \emph{Kripke model} 
$\KM = (W,\sigma,V)$, where 
$\sigma \colon W \rightarrow \Po(W)$ is defined as 
$\sigma(w):=\bigcup\Sigma(w)$ (a modal assignment).
\end{definition}

A \emph{state-pointed} (inquisitive modal) model is a pair $\M, s$ which
consists of a model $\M$ together with a distinguished information 
state $s\subset W$.
If $s$ is a \emph{singleton information
  state}, i.e.\ $s=\{w\}$ for some $w\in
W$, we also speak of a \emph{world-pointed} inquisitive model
 $\M, \{w\}$ for which we also write just $\M, w$.

Note that the associated Kripke structure reduces the inquisitive
assignment (of inquisitive states $\Sigma(w) \in \Po(\Po(W))$)
to an assignment of single information states $\sigma(w) = \bigcup
\Sigma(w) \in \Po(W)$ that can be cast as sets of successors w.r.t.\ a
modal accessibility relation. The natural relational encoding of
$\sigma$ is in terms of the accessibility relation 
\[
R = \{ (w,w') \colon w' \in \sigma(w) \} \subset W\times W, 
\]
so that 
$\sigma(w)$ becomes the set of immediate successors of $w$
w.r.t.\ $R$, i.e.\ the set $\sigma(w) = R[w] =\{ w' \colon (w,w') \in R \}$.
A corresponding, natural relational encoding of the inquisitive
assignment will have to resort to a two-sorted encoding with a second sort of
information states (from $\Po(W)$) besides the first sort $W$ of
worlds (see~Section~\ref{reprsubsec} below). In this two-sorted scenario, however,
the characteristic downward closure condition on the inquisitive
states $\Sigma(w)$ would remain non-elementary. This motivates the  
following relaxation of the notion of models to what we call 
\emph{pseudo-models}, which
may also be cast in the model-theoretic tradition of approximate or 
weak models that reduce the complexity of higher-order
features, similar to,
e.g.~the use of weak models in topological model theory \cite{Ziegler}.
As we shall see below, this concept can serve 
here as a useful tool for the analysis of the intended, proper models.

Compared to Definition~\ref{inqmm}, the following definition of
pseudo-models just waives the
downward closure requirement on inquisitive assignments.

\begin{definition}[pseudo-models and inquisitive closure]
\label{pseudoinqmm}
\label{inqclosure}\mbox{}\\
A \emph{pseudo-(inquisitive modal) model} is a structure
$\M=(W,\Sigma,V)$ over the set of possible worlds 
$W$ with propositional assignment $V: \props\rightarrow \Po(W)$ 
and a function $\Sigma: W\rightarrow\Po(\Po(W)) \setminus \{
\emptyset\}$, which assigns a non-empty subset 
$\Sigma(w)$ of $\Po(W)$ but not necessarily an inquisitive
state to every world $w \in W$.

With any pseudo-model $\M = (W,\Sigma,V)$ we associate 
its \emph{inquisitive closure} $\M\!\downarrow :=
(W,\Sigma\!\downarrow,V)$, 
which is the proper model whose inquisitive
assignment $\Sigma\!\downarrow $ is induced by $\Sigma$ according to 
\[
\begin{aligned}[t]
\Sigma\!\downarrow: W&\longrightarrow\Po(\Po(W))\text{,}
\\w&\longmapsto\Sigma\!\downarrow\!(w):= 
\{t\in\Po(W)\colon t\subset s\text{ for some }s\in\Sigma(w)\}.
\end{aligned}
\]
\end{definition}

We note that the distinction between a pseudo-model and its
inquisitive closure is immaterial at the level of the associated 
Kripke models with their modal assignment
$\sigma(w) = \bigcup \Sigma(w) = \bigcup \Sigma\!\downarrow\!(w)$.

\begin{definition}[$\INQML$: syntax]\label{syntax}\mbox{}\\
The basic syntax of $\INQML$ is given by the grammar
\[
\phi::=p\tbart\bot\tbart(\phi\wedge\phi)\tbart(\phi\rightarrow\phi)\tbart(\phi\stor\phi)\tbart\Box\phi\thickspace|\boxplus\phi,
\]
with negation, disjunction and diamond $\Diamond$ treated as
abbreviations according to $\neg\phi:=\phi\rightarrow\bot$,
$\phi\vee\psi:=\neg(\neg\phi\wedge\neg\psi)$, and 
$\Diamond\phi:=\neg\Box\neg\phi$.
\end{definition}

In \cite{Ciardelli} the symbol $\store$ is called \emph{intuitionistic disjunction}, $\Box$ the \emph{universal modality} and $\boxplus$ the \emph{inquisitive modality}.

The following extends the standard definition of the semantics of 
$\INQML$ from~\cite{CiardelliRoelofsen,Ciardelli} to
pseudo-models in a straightforward manner.

\begin{definition}[$\INQML$: semantics]\label{semantics}\mbox{}\\
Let $\M=(W,\Sigma,V)$ be a model or a 
pseudo-model, $s\subset W$ an information state. 
The semantics of $\INQML$ is defined as follows.
\begin{itemize}
	\item $\M,s\models p :\llr s\subset V(p)$
	\item $\M,s\models \bot :\llr s=\emptyset$
	\item $\M,s\models \phi\wedge\psi :\llr \M,s\models\phi$ and $\M,s\models \psi$
	\item $\M,s\models \phi\rightarrow\psi :\llr \forall t\subset s: \M,t\models\phi \Rightarrow \M,t\models \psi$
	\item $\M,s\models \phi\stor\psi :\llr \M,s\models\phi$ or $\M,s\models \psi$
	\item $\M,s\models \Box\phi:\llr \forall w\in s: \M,\sigma(w)\models\phi$
	\item $\M,s\models \boxplus\phi:\llr \forall w\in s\thickspace\forall t \in \Sigma(w): \M,t\models\phi$
\end{itemize}
\end{definition}

We note that the semantic clause for 
implication in Definition~\ref{semantics} refers to \emph{all} subsets $t
\subset s$, over models or pseudo-models alike. Similarly,
the downward closure condition known as \emph{persistency}, as
discussed in the following observation,  speaks about 
\emph{all} subsets $t \subset s$ of the given information state $s$,
also when interpreted in the non-standard setting of pseudo-models.

\begin{observation}
\label{props}
Over all models as well as 
pseudo-models $\M$, $\INQML$ has the following properties, for all 
$\phi \in  \INQML$:
\bre
\item
$\M,s\models\phi$ implies 
$\M,t\models\phi$ for all $t\subset s$;      
\item
$\M,\emptyset\models\phi$.
\ere
\end{observation}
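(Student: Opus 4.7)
The plan is a routine simultaneous induction on the syntactic structure of $\phi$, proving (i) and (ii) together. The crucial conceptual point to flag at the outset is that the semantic clauses in Definition~\ref{semantics} that could a~priori be sensitive to the difference between a model and its associated pseudo-model --- namely the clauses for $\to$ and for the modalities --- all quantify over \emph{all} subsets $t \subseteq s$ (for $\to$) or over worlds $w \in s$ and sets $t \in \Sigma(w)$ (for $\boxplus$), neither of which relies on downward closure of $\Sigma(w)$. Hence the induction goes through identically for models and for pseudo-models.

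For the base cases: $p$ is immediate since $t \subseteq s \subseteq V(p)$, and $\emptyset \subseteq V(p)$; for $\bot$, both parts are trivial because $s = \emptyset$ forces $t = \emptyset$, and $\M,\emptyset \models \bot$ directly. For $\phi \wedge \psi$ and $\phi \stor \psi$ the step is immediate from the induction hypothesis applied to each conjunct or disjunct. For $\phi \to \psi$, the persistency step uses transitivity of $\subseteq$: if $t \subseteq s$ and $u \subseteq t$, then $u \subseteq s$, so $\M,u \models \phi$ implies $\M,u \models \psi$ by the hypothesis on $s$; for the empty-state clause, the only $t \subseteq \emptyset$ is $t = \emptyset$, and $\M,\emptyset \models \psi$ by the inductive hypothesis for (ii) applied to $\psi$.

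For the modal cases, $\M,s \models \Box\phi$ says $\M,\sigma(w) \models \phi$ for every $w \in s$; shrinking $s$ to $t \subseteq s$ only restricts the range of this universal quantifier, which preserves truth, and if $s = \emptyset$ the clause holds vacuously. The case $\boxplus\phi$ is completely analogous, with the universal quantifier ranging over pairs $(w,t)$ with $w \in s$ and $t \in \Sigma(w)$; again shrinking $s$ only shrinks the index set, and for $s = \emptyset$ the statement is vacuous. Note that no part of the modal argument appeals to downward closure of $\Sigma(w)$, confirming that pseudo-models behave the same way.

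There is no real obstacle in this proof --- the only mildly subtle observation is the uniform handling of models and pseudo-models, which falls out of the fact that the semantic clauses are phrased in terms of arbitrary subsets and in terms of the (arbitrary) sets already listed in $\Sigma(w)$, so the induction never needs to \emph{produce} a subset that closure would have to guarantee. Consequently, both persistency and support at the empty state propagate through every connective uniformly in the ambient model's being proper or merely pseudo.
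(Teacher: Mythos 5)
Your proof is correct and is exactly the routine simultaneous induction on the syntax of $\phi$ that the paper leaves implicit by stating this as an Observation without proof; your point that no semantic clause ever needs downward closure of $\Sigma(w)$, so the argument is uniform across models and pseudo-models, is precisely the intended content. Nothing to add.
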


Property~(i) is called 
\emph{persistency} in~\cite{CiardelliRoelofsen,Ciardelli} 
(and usually referred to as \emph{downward closure}
in team semantic terminology), while~(ii) is called  
\emph{semantic ex-falso} (reflecting the \emph{empty team property}).

The following gives a further indication that
extension of the semantics of $\INQML$ beyond the intended
inquisitive models is very natural.
The proof is by straightforward 
syntactic induction following the clauses of
Definition~\ref{semantics}; for the $\boxplus$-case one uses persistency, and for the $\Box$-case, which refers to the
associated Kripke structure, one uses the fact that the associated modal assignment $\sigma$
is the same for the pseudo-model and its inquisitive closure.

\begin{proposition}\label{pseudoclosuresame}
Let $\M$ be a pseudo-model, $\M\!\downarrow$ its
inquisitive closure, and $s \subset W$ any information state over
their common universe $W$ of possible worlds. Then 
for $\phi\in \INQML$ we have
\[
\M\!\downarrow,s\models\phi
\;\llr\;
\M,s\models\phi.\]
\end{proposition}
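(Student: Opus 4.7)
The plan is a straightforward induction on the structure of $\phi$. Since $\M$ and $\M\!\downarrow$ share the same universe $W$ and the same propositional assignment $V$, the atomic cases ($p$ and $\bot$) are immediate from Definition~\ref{semantics}. The cases for $\wedge$ and $\stor$ follow directly from the inductive hypothesis at the same state $s$, and the case for $\rightarrow$ follows by applying the inductive hypothesis to each subset $t \subset s$ (this universal quantification over subsets is the same in $\M$ and $\M\!\downarrow$, being purely set-theoretic).

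The case for $\Box\phi$ is handled by noting that the associated modal assignment is preserved under inquisitive closure, i.e.\ $\sigma(w) = \bigcup \Sigma(w) = \bigcup \Sigma\!\downarrow\!(w)$, so the information state $\sigma(w)$ at which $\phi$ must be evaluated is identical in both structures, and the inductive hypothesis closes the case.

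The only non-trivial case is $\boxplus\phi$, which I expect to be the main point of friction. One direction is easy: since $\Sigma(w) \subset \Sigma\!\downarrow\!(w)$ by construction, if $\M\!\downarrow, s \models \boxplus\phi$ then in particular $\M\!\downarrow, t \models \phi$ for every $w \in s$ and every $t \in \Sigma(w)$, whence the inductive hypothesis yields $\M, t \models \phi$ and therefore $\M, s \models \boxplus\phi$. For the converse, suppose $\M, s \models \boxplus\phi$, fix $w \in s$, and let $t' \in \Sigma\!\downarrow\!(w)$ be arbitrary. By definition of $\Sigma\!\downarrow$, there exists $t \in \Sigma(w)$ with $t' \subset t$. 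The hypothesis gives $\M, t \models \phi$, and the inductive hypothesis then yields $\M\!\downarrow, t \models \phi$. Since $\M\!\downarrow$ is a proper model, persistency (Observation~\ref{props}) applies and we conclude $\M\!\downarrow, t' \models \phi$, as required. Note that persistency itself must be established first by a parallel induction; that is the one subtle bookkeeping point, since persistency is asserted for pseudo-models in Observation~\ref{props} and its proof likewise uses downward closure of $\Sigma\!\downarrow$ implicitly — but here we are invoking persistency on $\M\!\downarrow$, a bona fide model, so no circularity arises.
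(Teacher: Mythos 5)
Your proof is correct and follows essentially the same route as the paper: a syntactic induction in which the $\Box$-case rests on $\sigma(w)=\bigcup\Sigma(w)=\bigcup\Sigma\!\downarrow\!(w)$ and the $\boxplus$-case rests on persistency. One small quibble with your closing remark: persistency (Observation~\ref{props}) holds over arbitrary pseudo-models and its inductive proof nowhere needs downward closure of the inquisitive assignment, so there was never any threat of circularity --- you could equally well have applied persistency in $\M$ before invoking the inductive hypothesis.
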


This indicates that, as far as e.g.\ 
deductive reasoning is concerned, $\INQML$ might as well be cast
in the extended setting of pseudo-models. The difference is important, 
though, e.g.\ in issues concerning what is the natural habitat for the
key notion of model equivalence, viz.\ inquisitive bisimulation 
equivalence~\cite{CiardelliOtto} to be discussed in Section~\ref {HMsec}, 
or how $\INQML$ embeds into classical logics of reference like~$\FO$
(cf.\ key results in \cite{CiardelliOtto} vs.\ Corollary~\ref {vBcor} here).

\section{Standard translation}

A standard translation serves as a semantically adequate (one
could say, truthful) translation between logical frameworks.

Recall the well-known standard translation of modal logic $\ML$ into first-order
logic $\FO$. It is based on the straightforward transcription of the
clauses for the Kripke semantics of $\ML$ into their natural first-order 
analogues
over Kripke models viewed as ordinary relational
first-order structures. 
The situation for $\INQML$ is different, 
because inquisitive (pseudo-)models are naturally rendered as two-sorted
rather than ordinary single-sorted relational structures.\footnote{This
aspect of two-sortedness is similar to the treatment of neighbourhood
models~\cite{Pacuit}, but $\INQML$ modelling imposes a different and
in some sense tighter link between sorts.} 
Since an inquisitive assignment is a function
from the set $W$ of possible worlds to sets of sets of possible worlds, it
is of inherently higher type than a modal assignment. Its natural
relational encoding consists of a binary relation not over $W$ 
itself, but between $W$  (as a first sort) and a set $S$ of 
information states (as a second sort) where $S \subset \Po(W)$. 
In the following we discuss a setting and format for a standard
translation of $\INQML$ into $\FO$ in the natural two-sorted 
relational framework that is similar in spirit to that in~\cite{CiardelliOtto}
but more liberal and more uniform
(cf.~Remark~\ref{compareSTrem}).
The technical novelty underpinning
this new approach is the application of a graded notion of flatness, 
\emph{graded flatness}, to $\INQML$, as independently developed
by the first author in~\cite{Meissner}.

We want to associate the semantics of inquisitive modal
logic $\INQML$ over models (or even pseudo-models) with the
semantics of first-order logic over associated two-sorted 
relational (pseudo-)models. As usual, this task involves two
translation levels that need to go hand in hand: transformations 
linking the underlying (pseudo-)models $\M$ to
relational representations $\Mod$, from which the underlying 
(pseudo-)models $\M$ can be recovered as $\M = \M(\Mod)$; 
and a translation of formulae 
$\phi \in \INQML$ into formulae $\phi^\ast \in \FO$ such that 
\[
\M(\Mod),s \models \phi \llr \Mod,s \models \phi^\ast.
\]

\subsection{Relational representations of models}
\label{reprsubsec}

As relational counterparts of inquisitive (pseudo-)models we 
consider two-sorted relational structures of the form
\[
\Mod=(W,S,\epsilon,E,\Props)
\]
with some non-empty sets $W$ 
and $S$ as first and second sorts, linked by 
two mixed-sorted binary relations 
$\epsilon,E\subset W\times S$, and with
unary predicates $P_i\subset W$ over the first sort for all 
$p_i$, $i \in I$, that encode the propositional assignment as usual.
The intended r\^oles of $\epsilon$ and $E$ are as follows:
$\epsilon \subset W\times S$ encodes membership of possible worlds in information
states, so that $s \in S$ can be associated with 
$\underline{s}:=\{w\in W\colon (w,s)\in \epsilon\} \in \Po(W)$; 
and $E \subset W\times S$ encodes the inquisitive assignment as a
relation that associates with each world $w \in W$ the
set $E[w]:=\{s\in S\colon (w,s)\in E\}
\in \Po(S)$.

\begin{definition}[relational (pseudo-)models]\label{relinqmm}
\label{relpseudoinqmm}
\mbox{}\\
A structure $\Mod$ of the type above 
is a \emph{relational (inquisitive modal) model} if the following
conditions are satisfied
for all $w \in W$, $s,t \in S$ and $a \subset S$:
\bre
	\item $\underline{s}=\underline{t} \;\Rightarrow\; s=t$
          (extensionality);
	\item $E[w]\not=\emptyset$ (non-emptiness);
	\item if $a\subset \underline{s}$ for $s\in E[w]$, then $a = \underline{t}$
          for some $t\in E[w]$ (downward closure).
\ere
Correspondingly, a structure $\Mod=(W,S,\epsilon,E,\Props)$ of the same
format is a \emph{relational (inquisitive modal) pseudo-model} if it
satisfies (i) and (ii).
\end{definition}

Relational (pseudo-)models with distinguished information 
states $s \in S$ are described as 
\emph{state-pointed} or, in the case of singleton states $\underline{s} = \{ w
\}$ as \emph{world-pointed}, in analogy with the terminology for models.  
Due to extensionality~(i), we shall identify information states $s \in S$ 
with sets of worlds $\underline{s} \subset W$ and regard the second
sort $S$ as a subset of the power set $\Po(W)$, with $\epsilon$ as
the actual membership relation between $W$ and $S \subset \Po(W)$.

\begin{observation}\label{pseudoelementary}
The class $\mathscr{C}$ of all state-pointed (respectively
world-pointed) relational pseudo-models is $\Delta$-elementary, i.e.\
there exists a set of formulae $\Phi\subset\FO$ such that
$\mathscr{C}=$Mod$(\Phi)$.\footnote{If the set of propositions is finite, then $\mathscr{C}$ is 
even elementary, i.e.\ definable by a single $\FO$-formula.}
\end{observation}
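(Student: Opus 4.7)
The plan is to directly exhibit a set $\Phi \subset \FO$ in the two-sorted signature with sorts for $W$ and $S$, the mixed binary relations $\epsilon$ and $E$, and the unary predicates $P_i$ (together with a constant symbol for the distinguished element in the pointed case) such that $\mathrm{Mod}(\Phi)$ is exactly the class $\mathscr{C}$. The crucial observation is that the two surviving clauses in Definition~\ref{relpseudoinqmm}, namely extensionality (i) and non-emptiness (ii), are first-order by inspection; only clause (iii), the downward closure condition waived in the pseudo-model setting, would require quantification over subsets $a \subset S$ and is genuinely non-elementary.

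Concretely, I would let $\Phi$ contain the two sentences
\[
\forall s\, \forall t\, \bigl(\forall w\, (w\, \epsilon\, s \leftrightarrow w\, \epsilon\, t) \rightarrow s = t\bigr)
\quad\text{and}\quad
\forall w\, \exists s\, E(w,s).
\]
For the state-pointed case I add a constant $c$ of sort $S$ interpreted as the distinguished state; no further axiom is needed, since any element of $S$ may serve as a distinguished information state. For the world-pointed case I include in addition the sentence
\[
\exists w\, \bigl(w\, \epsilon\, c \wedge \forall w'\, (w'\, \epsilon\, c \rightarrow w' = w)\bigr),
\]
expressing that $\underline{c}$ is a singleton. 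A routine check against Definition~\ref{relpseudoinqmm} shows that a structure of the prescribed similarity type satisfies $\Phi$ if and only if it is a (state- or world-pointed) relational pseudo-model, which gives $\mathscr{C} = \mathrm{Mod}(\Phi)$.

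There is no real obstacle here; the only subtlety worth flagging is the footnote's claim about finiteness. If $I$ is finite, then the signature is finite, $\Phi$ may be replaced by the single conjunction of its finitely many sentences, and $\mathscr{C}$ is elementary in the strict sense. If $I$ is infinite, the unary predicates $P_i$ contribute infinitely many symbols to the signature, but the axiomatisation $\Phi$ remains a (still rather small) \emph{set} of $\FO$-sentences, which is precisely what $\Delta$-elementarity requires. In either case, the proof is nothing more than writing the two clauses of Definition~\ref{relpseudoinqmm}(i),(ii) in first-order form and verifying that no quantification over subsets of $S$ is involved.
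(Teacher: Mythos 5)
Your proposal is correct and takes essentially the same route the paper intends: the statement is left as an unproved observation precisely because clauses (i) and (ii) of Definition~\ref{relinqmm} are visibly first-order, and your two sentences (plus the singleton condition for the world-pointed case) are exactly that axiomatisation, with the footnote handled correctly. The only cosmetic difference is that the paper treats the distinguished state as a parameter interpreting a free state variable $\lambda$ (as in the set $\Delta(\lambda)$ used in the compactness proof) rather than as a constant symbol $c$, an inessential change of convention for pointed classes.
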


It is fairly easy to see that the class of all relational models cannot be 
$\Delta$-elementary, as downward closure (condition~(iii) in
Definition~\ref{relinqmm}) cannot be expressed without reference to arbitrary 
subsets of the first sort. Indeed, if it was $\Delta$-elementary then
so would be the class of all \emph{full relational models}, defined by the
additional condition that $S = \Po(W)$ be the full power set. That,
however, is ruled out by the observation that $\FO$ does not satisfy
compactness over this class of all full relational models: over that class,
$\FO$ captures the full power of monadic second-order logic $\MSO$ over the
first sort; so it can, e.g., define the class corresponding to Kripke models
that satisfy the well-foundedness condition of L\"ob frames (cf.~\cite{CiardelliOtto}).

\begin{observation}
The class of all relational models is not $\Delta$-elementary.
\end{observation}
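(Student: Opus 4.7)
My plan is to argue by contradiction along the lines already indicated in the paragraph preceding the observation. Suppose, for contradiction, that $\Phi \subset \FO$ is a set of two-sorted first-order sentences whose model class is exactly the class of relational (inquisitive modal) models. I will show that the class of \emph{full} relational models (those satisfying $S = \Po(W)$ under the canonical identification $s \mapsto \underline{s}$) is then $\Delta$-elementary as well, and then extract a failure of $\FO$-compactness from this.

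For the first step, I would augment $\Phi$ by the single $\FO$-axiom
\[
\exists w\, \exists s\, \bigl( (w,s) \in E \;\wedge\; \forall w'\; (w',s) \in \epsilon \bigr),
\]
asserting that some world $w$ has in its inquisitive state an information state $s$ with $\underline{s} = W$. Downward closure~(iii) then forces every subset $a \subset W = \underline{s}$ to arise as $\underline{t}$ for some $t \in E[w] \subset S$; combined with injectivity of $s \mapsto \underline{s}$ from extensionality~(i), this makes that map a bijection $S \to \Po(W)$. Hence the augmented theory axiomatises, up to this harmless identification, exactly the class of full relational models, so that class would be $\Delta$-elementary too.

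Over a full relational model, $\epsilon$ \emph{is} the genuine membership relation between $W$ and $\Po(W)$, so $\FO$-quantification over the second sort coincides with monadic second-order quantification over $W$. Writing $R \subset W \times W$ for the induced modal accessibility, where $(w,w') \in R$ iff some $s$ satisfies $(w,s) \in E$ and $(w',s) \in \epsilon$, every $\MSO$-sentence about $(W,R)$ then translates into an $\FO$-sentence about the full relational model. Moreover, every binary relation on $W$ arises as such an accessibility, e.g.\ by taking $\Sigma(w) := \Po(R[w])$ (which is non-empty and downward closed). So $\FO$ over full relational models captures arbitrary $\MSO$-definable properties of Kripke frames.

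The finishing blow comes from the well-known non-compactness of well-foundedness (of the kind exploited by L\"ob frames, cf.~\cite{CiardelliOtto}): let $\psi \in \MSO$ express well-foundedness of $R$, introduce fresh first-sort constants $c_0, c_1, \dots$ with axioms $(c_{i+1}, c_i) \in R$, and note that each finite subtheory is satisfied by a full relational model built from a finite $R$-chain, while the full theory has no model. This contradicts $\FO$-compactness, ruling out $\Phi$. The main subtlety lies in the first step: collapsing $S$ to the full power set by a \emph{single} extra $\FO$-axiom is only possible because downward closure~(iii) already encodes a substantial amount of power-set structure into $S$.
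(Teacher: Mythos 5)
Your overall strategy -- arguing by contrapositive through full relational models, using downward closure to let $\FO$ simulate $\MSO$ over the first sort, and then clashing a L\"ob-style well-foundedness condition against compactness -- is the same as the paper's sketch. But the way you implement the reduction breaks the final step. Your fullness-forcing axiom produces a world $w$ with a state $s\in E[w]$ such that $\underline{s}=W$, i.e.\ $\sigma(w)=W$; hence $(w,w')\in R$ for \emph{every} $w'$, in particular $(w,w)\in R$. So in every model of your augmented theory the accessibility relation $R$ has a reflexive point and is neither well-founded nor converse well-founded. Consequently, in your concluding compactness argument, any finite subtheory containing both the fullness axiom and the $\FO$ rendering of the well-foundedness sentence $\psi$ is already unsatisfiable: your finite-chain models cannot satisfy the fullness axiom without acquiring such a universal, reflexive world. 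The theory is therefore not finitely satisfiable, and no contradiction with $\FO$-compactness is obtained. (A secondary inaccuracy: the augmented theory does not axiomatise the class of \emph{all} full relational models, only the subclass possessing a universal world; that by itself would be harmless for a compactness contradiction, but it is precisely this unavoidable universal world that destroys well-foundedness. Note that any attempt to force genuine fullness through condition~(iii) must put the full state $W$ into some $E[w]$, so the clash is not incidental to your particular axiom.)

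The argument can be repaired by relativising, which is the missing idea. Force representation only of the subsets of a definable part of the first sort, say the worlds in $P_1$: add the axiom $\exists x\exists\lambda\,\bigl(Ex\lambda\wedge\forall y\,(P_1y\rightarrow y\!\in\!\lambda)\bigr)$. By downward closure~(iii), every subset of $P_1$ is then represented in the second sort, while the witnessing world may lie \emph{outside} $P_1$ and so does not interfere. Now assert well-foundedness of $R$ only on $P_1$, as the $\FO$ statement that every non-empty represented state contained in $P_1$ has an $R$-minimal element (faithful, since all subsets of $P_1$ are represented), and put the constants $c_i$ into $P_1$ with the chain axioms $c_{i+1}Rc_i$. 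Each finite subtheory is satisfied by a relational model consisting of a finite chain inside $P_1$ together with one external world whose inquisitive state is the downward closure of $\{P_1\}$, whereas the whole theory, under the assumption that $\mathrm{Mod}(\Phi)$ is the class of relational models, has no model at all; this contradicts $\FO$-compactness as desired. The paper's own sketch avoids your clash only because it runs the L\"ob argument over the class of full relational models itself, where no fullness-forcing axiom is needed; the reduction from relational to full relational models is left unelaborated there, and it is exactly that step where your difficulty -- and the need for relativisation -- arises.
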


\paragraph*{From relational (pseudo-)models to (pseudo-)models.}
 \mbox{}\\ 
With any relational (pseudo-)model $\Mod=(W,S,\epsilon,E,(P_i)_{i \in
  I})$ we associate the (pseudo-)model $\M(\Mod) = (W,\Sigma,V)$ 
that decodes the relational information in $\Mod$ into
functional assignments according to
\[
\begin{array}{r@{\;\longmapsto\;}l}
\Sigma \colon  w & \{ s \in S \colon (w,s) \in E \},
\\
\hnt
V \colon p_i & \{ 
\displaystyle w \in W \colon w \in P_i \}.
\end{array}
\]

We observe that the actual extension of the second sort $S \subset \Po(W)$ in
$\Mod$ is immaterial in as far as it may go beyond the range of $\Sigma$.

\paragraph*{From (pseudo-)models to relational (pseudo-)models.}
\mbox{}\\
With a (pseudo-)model $\M =(W,\Sigma,V)$ and a distinguished
information state $s \subset W$, we associate as a relational representation 
any relational (pseudo-)model 
$\Mod=(W,S,\epsilon,E,(P_i)_{i \in I})$ that encodes $\Sigma$ and $V$ 
over sorts $W$ and $S$, where $S \subset \Po(W)$ is rich enough to 
represent the image $\Sigma(w) \subset \Po(W)$ for all $w \in W$
as well as the distinguished information state $s$:
\[
\begin{array}{r@{\;}c@{\;}l}
S &\,\subset& 
\displaystyle 
\Po(W) \mbox{ with } S \supseteq \{ s \} \;\cup \bigcup_{w \in W} \!\Sigma(w),
\\
\hnt
\epsilon &:=& \{ (w,s) \in W \times S \colon w \in s \} = \; \in\restriction\!
  (W\!\times\! S),
\\
\hnt
E &:=& \{(w,s)\in W\times S \colon s\in\Sigma(w) \},
\\
\hnt
P_i &:=& \{ w\in W\colon w\in V(p_i) \} \mbox{ for } i \in I.
\end{array}
\]

Note that $\Mod$ is fully determined by $\M$ once the 
actual extension of the second sort $S$ is fixed; that however is 
naturally only subject to a richness condition. (We argued above that
insistence on fullness, i.e.\ on the maximal extension $S = \Po(W)$,
may not be advisable.)

\begin{definition}[relational representations]
\label{transmodel}
\mbox{}\\
A relational (pseudo-)model $\Mod$ is a \emph{relational representation}
of a given (pseudo-)model $\M$ precisely for $\M = \M(\Mod)$.
\\
A state-pointed relational (pseudo-)model $\Mod,s$
is a \emph{relational representation} of the state-pointed (pseudo-)model $\M,s$
if in addition the distinguished information state $s$ is represented as an
element of its second sort $S$.\footnote{
Note that the distinguished state becomes an element of the second sort 
of~$\Mod$, hence available as a parameter-definable subset of the first sort, but not 
as a constant or predicate.}
\end{definition}

It is clear from the above that every state-pointed (pseudo-)model
admits relational representations, and that every relational
(pseudo-)model $\Mod$ represents a unique (pseudo-)model, viz.\ 
$\M(\Mod)$.

\subsection{Graded flatness and the standard translation}
\label{gfandstsubsec}

Compared to the well-known standard translation for plain modal 
logic over Kripke models 
(for which $\M$ and $\Mod$ are practically identical), $\INQML$ 
involves challenges associated with the semantics of implication and $\Box$.
The corresponding clauses in Definition~\ref{semantics} involve reference
to information states that might not necessarily be directly available 
in the second sort of $\Mod$. 

There are different suggestions to overcome this problem.
An elimination of $\Box$ is possible via so-called
\emph{resolutions} \cite{Ciardelli} or via the $\Box$-free
characteristic formulae for finitary bisimulation classes
from~\cite{CiardelliOtto};
then a straightforward translation can be given. That 
standard translation, however, is based on stronger closure conditions
on the universe of information states in the relational
encodings of the inquisitive models, which give $\FO$ direct access to 
the relevant information states.
Such stronger closure conditions may further interfere with compactness 
over the required classes of models, as mentioned in \S~\ref{reprsubsec}.
A straightforward translation without elimination of $\Box$ is also 
possible
but also requires stronger closure conditions on the class of relational models.

\begin{remark}
\label{compareSTrem}
Compared to the previous translations, the present proposal is more
general, more uniform and more direct:
it works for the natural class of all 
pseudo-models and does not require any additional (elementary or non-elementary) 
closure conditions on the state universe of the intended relational models.
Moreover, it is defined directly by induction on the 
unrestricted standard syntax of $\INQML$, without appeal to specific 
syntactic normal forms.
\end{remark}

Our standard translation relies on the following concept of \emph{graded flatness}, which
had also been investigated in the team-semantic context of dependence logic
(cf.~\cite{Kontinen}) and in the context of inquisitive first-order
logic (cf.~\cite{Grilletti}) under the name of \emph{coherence}.
For our context it
was independently (re-)discovered and put to this new use in~\cite{Meissner}.
Our preferred terminology of \emph{graded flatness}
derives from the notion of \emph{flatness} in team semantics. If we
think of information states (sets of worlds) as teams, then 
a formula $\phi \in \INQML$
would be \emph{flat} (in the team semantic sense) if its truth 
in $s$ is equivalent to truth in $\{w\}$ for all $w \in s$.
Graded flatness generalises this idea to quantitative bounds on the
size of subsets $s' \subset s$ that need to be investigated, 
rather than singleton subsets. Such a size bound can be
obtained as a syntactic parameter as follows.

\begin{definition}[flatness grade]\label{flatnessgrade}
\mbox{}\\
The \emph{flatness grade $\flat (\phi)\in\N$} of $\phi \in \INQML$ is 
defined by syntactic induction, for all $\psi,\chi \in \INQML$, according to
\begin{itemize}
	\item[--] $\flat(\phi) := 0$ for 
atomic $\phi$ and all $\phi$ of the form $\Box\psi$ or $\boxplus\psi$;
	\item[--] $\flat (\psi\wedge\chi) := \max\{\flat (\psi), \flat(\chi)\}$;
	\item[--] $\flat (\psi\rightarrow\chi) := \flat(\chi)$;
	\item[--] $\flat (\psi\stor\chi) := \flat(\psi) + \flat(\chi) + 1$.
\end{itemize}
\end{definition}

\begin{proposition}[graded flatness]\label{gradedflatness}
\mbox{}\\
Inquisitive modal logic $\INQML$ satisfies the following \emph{graded flatness
  property}.
	For all $\phi\in \INQML$
and state-pointed (pseudo-)models $\M,s$:
\[\M,s\models\phi \;\llr\; \M,t\models\phi
\mbox{ for all }\, t \subset s\thickspace \mbox{ of size } 
\left|t\right|\leq\flat (\phi)+1.
\]
\end{proposition}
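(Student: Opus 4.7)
The plan is to prove the equivalence by straightforward syntactic induction on $\phi$, observing that the left-to-right direction in every case is immediate from persistency (Observation~\ref{props}(i)), since any $t\subset s$ with $|t|\leq \flat(\phi)+1$ is in particular a subset of $s$. All of the real content thus lies in the right-to-left direction, and the key design feature of $\flat$ is precisely that it is chosen to make the inductive step go through in each clause of the grammar.

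First I would dispatch the base and modal cases, where $\flat(\phi)=0$ and the relevant subsets are exactly those of size at most $1$. For $p$, truth at $s$ means $s\subset V(p)$, which is a pointwise condition on singleton subsets; for $\bot$, truth at $s$ forces $s=\emptyset$, and if $s\neq\emptyset$ any singleton $\{w\}\subset s$ witnesses the failure; for $\Box\psi$ and $\boxplus\psi$, the semantic clauses already iterate universally over $w\in s$, so truth at $s$ is equivalent to truth at the empty set together with truth at every singleton $\{w\}\subset s$. For $\wedge$ I would use that $\flat(\psi\wedge\chi)=\max\{\flat(\psi),\flat(\chi)\}$ dominates both grades, so the hypothesis for $\psi\wedge\chi$ immediately supplies the hypotheses of the inductive equivalences for $\psi$ and $\chi$ separately, whence $\M,s\models\psi$ and $\M,s\models\chi$. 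For $\to$ I would exploit that the clause $\M,s\models\psi\to\chi$ already quantifies over \emph{all} $t\subset s$: fix any $t\subset s$ with $\M,t\models\psi$ and, to conclude $\M,t\models\chi$, apply the inductive equivalence for $\chi$; for every $u\subset t$ with $|u|\leq \flat(\chi)+1=\flat(\phi)+1$ we have $u\subset s$, the hypothesis gives $\M,u\models\psi\to\chi$, and persistency of $\psi$ from $t$ down to $u$ feeds the antecedent, yielding $\M,u\models\chi$.

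The main obstacle, and the case that actually motivates the particular definition of $\flat$, is the tensor disjunction $\stor$. Here I would argue contrapositively: assuming $\M,s\not\models\psi\stor\chi$, by the inductive equivalences there exist witnessing subsets $t_\psi,t_\chi\subset s$ of sizes at most $\flat(\psi)+1$ and $\flat(\chi)+1$ respectively with $\M,t_\psi\not\models\psi$ and $\M,t_\chi\not\models\chi$. Setting $t:=t_\psi\cup t_\chi\subset s$, persistency gives $\M,t\not\models\psi$ and $\M,t\not\models\chi$ (else the smaller witnesses would satisfy the corresponding formulae), so $\M,t\not\models\psi\stor\chi$; and the size bound
\[
|t|\leq |t_\psi|+|t_\chi|\leq \flat(\psi)+\flat(\chi)+2=\flat(\phi)+1
\]
is exactly met. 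It is this union-of-witnesses step that forces the additive clause $\flat(\psi\stor\chi)=\flat(\psi)+\flat(\chi)+1$ and ties the whole inductive bookkeeping together.
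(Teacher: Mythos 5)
Your proof is correct and follows essentially the same route as the paper: induction on $\phi$ with the left-to-right direction from persistency, and the crucial $\stor$-case handled by taking the union of the two failure witnesses, bounded by $\flat(\psi)+\flat(\chi)+2$ --- exactly the paper's key step, merely phrased contrapositively. Your explicit treatment of the remaining cases (atoms, $\bot$, $\wedge$, $\rightarrow$, $\Box$, $\boxplus$) fills in details the paper leaves to the reader and is likewise sound.
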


\begin{proof}
The direction from left to right follows immediately from persistency
for $\INQML$. The implication from right to left 
is shown by syntactic induction on $\phi$. 
We illustrate the $\store$-case, which is the most interesting.
\\
For $\phi=\psi\stor\chi$  let 
$m:= \flat(\psi)$ and $n :=\flat(\chi)$ so that
$\flat(\phi)=m+n+1$, and assume that
$\M,t\models\psi\stor\chi$
for all $t \subset s$ of size $\left|t\right|\leq m+n+2$.
By \Cref{semantics} and the induction hypothesis we get that
for all $t \subset s$ with $\left|t\right|\leq m+n+2$:
\[
\begin{array}{rrl}
&
\forall a \subset t \text{ with }\left|a\right|\leq
m+1: &\M,a\models\psi
\\
\mbox{ or } &
\forall a \subset t \text{ with }\left|a\right|\leq
n+1: &\M,a\models\chi.
\end{array}
\]

It follows that
\[
\begin{array}{rrl}
&
\forall t \subset s\text{ with }\left|t\right|\leq m+1: &
\M,t\models\psi
\\
\mbox{ or }
& \forall t \subset s\text{ with }\left|t\right|\leq n+1:
& \M,t\models\chi.
\end{array}
\]
Indeed, if this were false, there would exist information 
states $t_{1}\subset s$ with $\left|t_{1}\right|\leq m+1$ 
and $t_{2}\subset s$ with $\left|t_{2}\right|\leq n+1$ 
such that $\M,t_{1}\not\models\psi$ and $\M,t_{2}\not\models\chi$. 
But then, for $t_{0}:=t_{1}\cup t_{2}$, the previous statement 
would be false: we have $\left|t_{0}\right|\leq m+n+2$ 
but $t_{1}\subset t_{0}$ violates the first disjunct and 
$t_{2}\subset t_{0}$ violates the second disjunct. 

With the induction hypothesis we get $\M,s\models\psi\stor\chi$.
\end{proof}

We use the following notation.
Generally, we 
take variable symbols 
$x,y, \ldots$ to be interpreted 
over the first sort (worlds), and variable symbols
$\lambda,\mu,\ldots$ over the second sort (information states).
A tuple of length $n$ is denoted as $\tupx=(x_1,...,x_n)$, 
the set of its components as $\{\tupx\} = \{ x_1,...,x_n\}$. 
If the length of a tuple is determined by some flatness grade 
$\flat(\phi)$, we write $\tuppxphi$ for the tuple
$(x_1,...,x_{\flat(\phi)+1})$ and $\suppxphi$ for the associated set.
For better readability we write $x\!\in\!\lambda$ instead of
$\epsilon x\lambda$ in the following first-order formulae.

Our standard translation $\phi \mapsto \phi^\ast(\lambda)$ 
is defined below, by syntactic induction on $\phi \in \INQML$. 
The interesting, somewhat non-standard feature involves the 
necessary passage between sorts: on one hand, $\phi$ is translated
into the first-order formula $\phi^\ast(\lambda)$ in a free variable
$\lambda$ of the second sort,
which is to be interpreted as the distinguished information
state $s$; the core induction, on the other hand, deals with auxiliary 
formulae $\ST(\xi,\tupx)$
in tuples $\tupx$ of free variables of the 
first sort that capture the semantics of $\xi^\ast(\mu)$
for $\mu = \{\tupx\}$.

\begin{definition}[standard translation]\label{standardtranslation}
\mbox{}\\
For $\phi\in \INQML$ define its \emph{standard translation} 
$\phi^*(\lambda)\in \FO$ in one free state variable
$\lambda$ as
\[
\phi^*(\lambda):= \forall
\tuppxphi
\Bigl(\!\!\!
\bigwedge_{\quad k \leq\flat(\phi)+1} \!\!\!\!\!\!\!\!\!
x_k \!\in\! \lambda \;\; \longrightarrow \; \ST(\phi,\tuppxphi)\Bigr),
\]
where the auxiliary first-order formulae $\ST(\xi,\tupx)$,
with free world
variables among $\tupx$, are defined by syntactic induction according to:
\begin{itemize}
\item[--] 
$\displaystyle  \ST(p,\tupx):=\underset{k\leq n}{\bigwedge}Px_k$
\item[--]
$\displaystyle \ST(\bot,\tupx):= \bigwedge_{k\leq n} \neg x_k \!=\! x_k$
\item[--]
$\displaystyle \ST(\psi\wedge\chi,\tupx):=\ST(\psi,\tupx)\wedge \ST(\chi,\tupx)$
\item[--]
$\ST(\psi\rightarrow\chi,\tupx):=\forall\tupy \Bigl[ \Bigl(
\bigwedge_{k\leq n} \bigvee_{l\leq n}  
y_k=x_l\Bigr) \longrightarrow\bigl( \ST(\psi,\tupy)\rightarrow
\ST(\chi,\tupy)\bigr)\Bigr]$
\item[--]
$\displaystyle \ST(\psi\stor\chi,\tupx):=\ST(\psi,\tupx)\vee \ST(\chi,\tupx)$
\item[--] 
$\displaystyle \ST(\Box\psi,\tupx):=\underset{k\leq n}{\bigwedge}
\forall \tuppy \forall \boldsymbol{\mu}_{\psi}
\Bigl( \!\!\!\!
\bigwedge_{\quad l \leq \flat(\psi)+1}
\!\!\!\!\!\!\!\!
\bigl(
Ex_k \mu_l\wedge y_l \!\in\! \mu_l
\bigr)
\; \longrightarrow \; \ST(\psi,\tuppy) \;\Bigr) $
\item[--] 
$\displaystyle \ST(\boxplus\psi,\tupx):=\underset{k\leq n}{\bigwedge}\forall \mu
\left(Ex_k \mu\rightarrow\psi^*(\mu)\right)$
\end{itemize}
\end{definition}

\noindent
The intuition for, e.g.\ the translation 
of $\Box$ in this definition  is the following.
We want to suitably mimic the semantics of $\Box$ in our context, 
hence we want to say that
for all worlds $x_k\in \tupx$, $\psi^*(\lambda)$ is satisfied by the
information state $\sigma(x_k)$.
For this we need access to the states $\sigma(x_k)$.
Via the subformula $Ex_k \mu_l\wedge y_l \in \mu_l$ of our translation we can check whether a
world $y_l$ is element of $\sigma(x_k)$.
Since $\sigma(x_k)$ is in general not necessarily represented as an
element of the second sort
of the relational pseudo-model that is supposed
to satisfy the translation, we cannot access it directly via a state variable.
Instead we can fix all its substates of size $\flat(\psi)+1$ in the shape of
tuples of worlds via
\[
\forall \tuppy \forall \boldsymbol{\mu}_{\psi}
\Bigl( \!\!\!\! \bigwedge_{\quad l \leq \flat(\psi)+1} \!\!\!\!\!
\bigl( Ex_k \mu_l\wedge y_l \in \mu_l\bigr) \longrightarrow 
\; \ST(\psi,\tuppy)
\Bigr).\]
By \Cref{gradedflatness} this correctly expresses that
$\sigma(x_k)$ satisfies $\psi^*(\lambda)$.

\begin{remark}\label{worldstandardtranslation}
  We may analogously define a standard translation
  $\pi^\ast(x)$ in the single free world variable~$x$
of the first sort, which covers the world-pointed case
just as the standard translation $\phi^\ast(\lambda)$
covers the state-pointed case. 
For this we may just use
$\phi^\ast(x) := \ST(\phi,x)$.\footnote{This naturally also captures
semantics of $\phi^\ast(\lambda)$ for $\lambda = \{ x \}$, as it should.}
\end{remark}

The following shows that the proposed standard translation is adequate --
in preserving the semantics and turning $\INQML$ into a 
\emph{syntactic fragment} of $\FO$ -- over the rich class of relational
encodings of (pseudo-)models.

\begin{proposition}[$\INQML$ as a fragment of $\FO$]
\label{fragment}
\mbox{}\\
Let $\phi\in \INQML$ and let $\phi^*(\lambda)\in \FO$ be its standard translation. Let $\M,s$ be a (pseudo-)model and $\Mod,s$ be a relational representation of $\M,s$. Then
\[\M,s\models\phi\llr\Mod,s\models\phi^*(\lambda),\]
where the state variable $\lambda$ is interpreted as $s$.
\end{proposition}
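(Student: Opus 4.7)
The plan is to prove the proposition by induction on $\phi \in \INQML$, with the bulk of the work carried at the level of the auxiliary formulae $\ST(\xi, \tupx)$. Specifically, I would establish the following auxiliary claim: for every $\xi \in \INQML$, every tuple $\tupx = (x_1, \ldots, x_n)$ of first-sort variables, and every interpretation of $\tupx$ by $(w_1, \ldots, w_n) \in W^n$,
\[
\Mod \models \ST(\xi, \tupx)[w_1, \ldots, w_n] \;\llr\; \M, \{w_1, \ldots, w_n\} \models \xi.
\]
Given this, the proposition follows at once: $\Mod, s \models \phi^*(\lambda)$ asserts precisely that $\ST(\phi, \tuppxphi)$ holds for every interpretation of $\tuppxphi$ by worlds in $s$; by the auxiliary claim this is equivalent to $\M, t \models \phi$ for every $t \subset s$ with $|t| \leq \flat(\phi)+1$; and by \Cref{gradedflatness} this is equivalent to $\M, s \models \phi$.

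The auxiliary claim itself would be proved by induction on $\xi$, intertwined with the outer induction (which is needed for the $\boxplus$ case, where $\psi^*(\mu)$ appears inside $\ST(\boxplus\psi,\cdot)$). The atomic cases, $\bot$, and the propositional connectives $\wedge$ and $\store$ are mechanical, since $\ST$ commutes with the corresponding semantic clauses and one reads off the equivalence immediately from \Cref{semantics}. In the case $\xi = \psi \to \chi$, the auxiliary tuple $\tupy$ has the same length $n$ as $\tupx$, and the guard $\bigwedge_k \bigvee_l y_k = x_l$ restricts its interpretations to tuples with components in $\{w_1, \ldots, w_n\}$; such tuples realise exactly the subsets of $\{w_1, \ldots, w_n\}$ (every such subset, having size at most $n$, can be written as a length-$n$ tuple by repetition), so the induction hypothesis applied to $\ST(\psi, \tupy)$ and $\ST(\chi, \tupy)$ yields the semantic clause for $\to$ at the state $\{w_1, \ldots, w_n\}$.

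The two remaining cases involve the modalities. For $\xi = \boxplus\psi$, the subformula $\forall \mu (Ex_k\mu \to \psi^*(\mu))$ ranges $\mu$ exactly over $\Sigma(w_k)$, and by the outer induction hypothesis $\psi^*(\mu)$ coincides with $\M, \underline{\mu} \models \psi$, which matches the semantic clause for $\boxplus$. For $\xi = \Box\psi$, the guard $\bigwedge_l (Ex_k\mu_l \wedge y_l \in \mu_l)$ says that each $y_l$ lies in some $\underline{\mu_l} \in \Sigma(w_k)$, i.e.\ $y_l \in \bigcup \Sigma(w_k) = \sigma(w_k)$; hence the whole prefix enumerates precisely the tuples of length $\flat(\psi)+1$ in $\sigma(w_k)$, and the induction hypothesis reduces the conjunct at $x_k$ to the statement that $\M, t \models \psi$ for every $t \subset \sigma(w_k)$ with $|t| \leq \flat(\psi)+1$. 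A final appeal to graded flatness for $\psi$ converts this to $\M, \sigma(w_k) \models \psi$, matching the semantic clause for $\Box$. The $\Box$ case is the main obstacle: $\sigma(w_k)$ is not generally available as a second-sort element of $\Mod$, so membership in $\sigma(w_k)$ must be encoded indirectly via the two-step condition $\exists \mu(Ex_k\mu \wedge y \in \mu)$, and quantification over all subsets of $\sigma(w_k)$ must be reduced to quantification over tuples of bounded size via graded flatness — precisely the key work that the new standard translation is designed to accomplish.
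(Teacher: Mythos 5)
Your proposal is correct and follows essentially the same route as the paper: the same auxiliary claim for tuples of worlds (the paper's $(\ast)$), the same reduction of the state-pointed statement via \Cref{gradedflatness}, and the same treatment of the key $\Box$ and $\boxplus$ cases, with the outer/inner intertwining being only a presentational variant of the paper's inline unfolding of $\psi^*(\mu)$. The one cosmetic point is that length-$n$ tuples over $\{w_1,\dots,w_n\}$ realise only the \emph{non-empty} subsets, so the empty state has to be discharged by semantic ex-falso (\Cref{props}), exactly as the paper itself notes.
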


\begin{proof}
We show below that, for $\phi \in \INQML$,
\[\M,\supw\models\phi\llr\Mod,\tupw \models \ST(\phi,\tupx)\tag{$\ast$}\]
for all finite tuples of worlds $\tupw$ from $W$
and matching tuples $\tupx$ of variables.
From this we obtain the claim of the proposition, that 
$\M,s\models\phi\Leftrightarrow\Mod,s\models\phi^*(\lambda)$,
as follows.
By \Cref{gradedflatness},
$\M,s\models\phi$ is equivalent to
\[
\forall t\subset s \text{ with }
\left|t\right|\leq\flat(\phi)+1:\M,t\models\phi,
\]
which is further equivalent to
\[
\forall \tuppwphi\in s:\M,\suppwphi\models\phi\text{.}
\]
(Note that for the direction from right to left we need
$\M,\emptyset\models\phi$ on the right-hand side,
but the set of all tuples $\tuppwphi\in s$ of size
$1\leq\left|\tuppwphi\right|\leq\flat(\phi)+1$
does not contain the
empty tuple. This, however, is unproblematic for the equivalence 
since $\M,\emptyset\models\phi$ trivially holds by \Cref{props}.)
\\
With $(\ast)$ we find that $\M,s\models\phi$ is equivalent to
\[
\forall\tuppwphi\in s:\Mod,\tuppwphi\models
\ST(\phi,\tuppxphi),
\]
which translates equivalently into
\[
\Mod, s \models\forall \tuppxphi
\Bigl( \!\!\!\!\!\bigwedge_{\quad k \leq \flat(\phi)+1} \!\!\!\!\!\!\!  x_k \!\in\!
\lambda \;\; \longrightarrow \; \ST(\phi,\tuppxphi) \Bigr),
\]
which is the same as $\Mod,s\models\phi^*(\lambda)$, 
by~\Cref{standardtranslation}.

\medskip
It remains to show $(\ast)$ by syntactic induction.
We explicitly treat the $\Box$- and $\boxplus$-steps, 
and show the implication from left to right in each case.

For the case of $\phi=\Box\psi$ assume $\M,\supw\models\Box\psi$,
which, by \Cref{semantics}
and~\Cref{gradedflatness},  
means that
\[
\forall w\in \supw\thickspace\forall t\subset\sigma(w)\text{ with
}\left|t\right|\leq\flat(\psi)+1:\M,t\models\psi.
\]
This is further equivalent to
\[
\forall
w\in\supw\thickspace\forall
\textbf{u}_{\psi}
\in\sigma(w):\M,\{\textbf{u}_{\psi}\}\models\psi,
\]
and by induction hypothesis to
\[
\forall w\in\supw\thickspace\forall \textbf{u}_{\psi}\in\sigma(w):
\Mod,\textbf{u}_{\psi}\models \ST(\psi,\tuppy).
\]
This condition is correctly rendered 
in $\FO$ as 
\[
\displaystyle 
\Mod,\tupw\models 
\ST(\Box\psi,\tupx) =\underset{k\leq n}{\bigwedge}
\forall \tuppy \forall \boldsymbol{\mu}_{\psi}
\Bigl( \!\!\!\!
\bigwedge_{\quad l \leq \flat(\psi)+1}
\!\!\!\!\!\!\!\!
\bigl(
Ex_k \mu_l\wedge y_l \!\in\! \mu_l
\bigr)
\; \longrightarrow \; \ST(\psi,\tuppy) \;\Bigr):
\]
universal quantification over $w\in\supw$ is expressed by the conjunction
over the
$x_k\in\tupx$ enumerating the
$w_k\in\tupw$; the quantification
$\forall \textbf{u}_{\psi}\in\sigma(w)$ is represented 
by the quantification $\forall \tuppy \forall
\boldsymbol{\mu}_{\psi} $, relativised so that the
$y_l$ are instantiated by worlds from $\sigma(w_k)$.

For the case $\phi=\boxplus\psi$ assume $\M,\supw\models\boxplus\psi$.
With \Cref{semantics} and \Cref{gradedflatness} we get
\[\forall w\in \supw\thickspace\forall t\in\Sigma(w)\thickspace\forall a\subset t
\text{ with }\left|a\right|\leq\flat(\psi)+1:\M,a\models\psi,\]
hence
\[\forall w\in \supw\thickspace\forall t\in\Sigma(w)\thickspace\forall \textbf{u}_{\psi}\in t :
\M,\{\textbf{u}_{\psi}\}\models\psi.\]
By induction hypothesis we get 
\[\forall w\in \supw\thickspace\forall t\in\Sigma(w)\thickspace\forall \textbf{u}_{\psi}\in t :
\Mod,\textbf{u}_{\psi}\models \ST(\psi,\tuppy).\]
Again we can express this in FO according to
\[\Mod,\tupw\models\underset{k\leq n}{\bigwedge}\forall \mu
\Bigl[ Ex_k \mu\longrightarrow
	\Bigl(\forall \tuppy\Bigl(
\!\!\!\!
\bigwedge_{\quad l \leq \flat(\psi)+1}
\!\!\!\!\!\!\!\!\! y_l\in \mu
\; \longrightarrow \; \ST(\psi,\tuppy)\Bigr)\Bigr)\Bigr],\]
which is $\Mod,\tupw\models \ST(\boxplus\psi,\tupx)$ by~\Cref{standardtranslation}.
\end{proof}

\begin{remark}\label{worldfragment}
The analogue of \Cref{fragment} for world-pointed models
and our world-version of the standard translation
from~\Cref{worldstandardtranslation}
is easily checked.
\end{remark}

\section{Compactness for $\INQML$}
\label{compactsec}

It is known that $\INQML$ has a sound and strongly 
complete proof calculus and therefore satisfies compactness (cf.~\cite{Ciardelli}). 
We use our standard translation to give a new, purely model-theoretic
proof, essentially by reduction to first-order compactness.
But while the corresponding reduction
is totally straightforward for basic modal logic $\ML$ over Kripke structures,
we here need to deal with the additional complication that the class
of 
relational models is not $\Delta$-elementary. Correspondingly, a
detour through pseudo-models plays an essential r\^ole in our proof.
Moreover, the proof also shows that compactness over the class of all 
pseudo-models works in a straightforward manner. 
We interpret this as an additional, natural indication that $\INQML$
could also be explored over the extended class of all pseudo-models.

We consider the satisfiability version of compactness. Of course, by
semantic ex-falso (cf.\ Observation~\ref{props}), any set 
$\Phi\subset\INQML$ is trivially satisfied by any model
$\M,\emptyset$. 
For a non-trivial statement we need to exclude the empty state.

\begin{proposition}[compactness]\label{compactnessinqml}
\mbox{}\\
$\INQML$ satisfies compactness, i.e.\ a set of formulae
$\Phi\subset \INQML$ is satisfiable by 
some non-empty state of some (pseudo-)model
if, and only if, every finite 
subset of $\Phi$ is satisfiable by 
some non-empty state of some (pseudo-)model.
\end{proposition}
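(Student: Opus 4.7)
The plan is to reduce the compactness statement for $\INQML$ to first-order compactness by means of the standard translation of Definition~\ref{standardtranslation}, working over the elementary class of relational pseudo-models rather than over the non-elementary class of relational models. The direction from satisfiability to finite satisfiability is trivial, so only the converse requires work.

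Suppose every finite subset of $\Phi$ is satisfied by some non-empty state of some (pseudo-)model. Consider the first-order theory
\[
T := \Phi^\ast(\lambda) \;\cup\; \Psi_{ps} \;\cup\; \{\exists x\, (x\!\in\!\lambda)\},
\]
where $\Phi^\ast(\lambda) := \{\phi^\ast(\lambda) \colon \phi \in \Phi\}$ is the set of standard translations, $\Psi_{ps} \subset \FO$ is a set of axioms defining the class of relational pseudo-models according to Observation~\ref{pseudoelementary}, and the last conjunct expresses that the state interpreting the free variable $\lambda$ is non-empty. I would first verify that $T$ is finitely satisfiable in the first-order sense: given a finite $\Phi_0 \subset \Phi$, pick a pseudo-model $\M$ and a non-empty state $s$ with $\M,s \models \phi$ for all $\phi \in \Phi_0$, and choose a relational representation $\Mod,s$ (which exists by Definition~\ref{transmodel}). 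Then $\Mod \models \Psi_{ps}$, the state $s$ is non-empty as an element of the second sort, and Proposition~\ref{fragment} gives $\Mod,s \models \phi^\ast(\lambda)$ for every $\phi \in \Phi_0$.

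By the classical compactness theorem for two-sorted first-order logic, $T$ then has a model $\Mod^\ast, s^\ast$. Because $\Mod^\ast \models \Psi_{ps}$, this is a relational pseudo-model, and we may pass to the associated pseudo-model $\M(\Mod^\ast)$ as in Section~\ref{reprsubsec}. The distinguished element $s^\ast$ of the second sort decodes to an information state $\underline{s^\ast} \subset W$ which is non-empty by the last conjunct of $T$, and Proposition~\ref{fragment} applied in the other direction yields $\M(\Mod^\ast), \underline{s^\ast} \models \phi$ for every $\phi \in \Phi$. This already proves compactness over the class of pseudo-models. To obtain a proper (inquisitive) model, take the inquisitive closure $\M(\Mod^\ast)\!\downarrow$; Proposition~\ref{pseudoclosuresame} guarantees that $\INQML$-satisfaction at $\underline{s^\ast}$ is preserved, and $\underline{s^\ast}$ remains non-empty.

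The potentially delicate point and the genuine reason why a naive reduction to $\FO$-compactness is not routine is precisely that the class of relational \emph{models} fails to be $\Delta$-elementary, so one cannot apply compactness directly in that setting; the downward closure clause~(iii) of Definition~\ref{relinqmm} is non-elementary. The pseudo-model detour is what makes the argument go through, and the two-way bridge provided by Proposition~\ref{fragment} (semantic adequacy of the standard translation over pseudo-models) together with Proposition~\ref{pseudoclosuresame} (conservativity of passing to the inquisitive closure) is exactly what closes the loop back to proper models.
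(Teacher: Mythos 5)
Your proposal is correct and follows essentially the same route as the paper's own proof: standard translation of $\Phi$, the elementary axiomatisation of state-pointed relational pseudo-models with a non-empty distinguished state, first-order compactness, decoding via Proposition~\ref{fragment}, and finally passage to the inquisitive closure using Proposition~\ref{pseudoclosuresame}. The only cosmetic difference is that you state the non-emptiness of $\lambda$ as a separate formula $\exists x\,(x\!\in\!\lambda)$, whereas the paper folds it into the defining set $\Delta(\lambda)$.
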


\begin{proof}
Let $\Phi\subset \INQML$ be a set of formulae such that any finite subset
$\Phi_0\subset\Phi$ is satisfiable by a state-pointed model $\M_{\Phi_0},s_{\Phi_0}$
such that $s_{\Phi_0}\not=\emptyset$. We want to show that $\Phi$ is satisfiable as well.
\\Let $\Phi^*(\lambda)\subset \FO$ (respectively $\Phi_0^*(\lambda)\subset \FO$) be the
set of all standard translated formulae of $\Phi$ (respectively $\Phi_0$) and let for
each $\Phi_0\subset\Phi$ the relational model $\Mod_{\Phi_0},s_{\Phi_0}$ be a relational
representation of $\M_{\Phi_0},s_{\Phi_0}$. Then \Cref{fragment} yields
$\Mod_{\Phi_0},s_{\Phi_0}\models\Phi_0^*$ for all finite $\Phi_0^*\subset\Phi^*$.
\\We let $\Delta=\Delta(\lambda)\subset \FO$ be a set of formulae
defining the class of all state-pointed relational pseudo-models 
(see \Cref{pseudoelementary}) 
with non-empty state ($\lambda \not= \emptyset$)
and let $\tilde{\Phi}:=\Phi^*\cup\Delta$. Since
$\Mod_{\Phi_0},s_{\Phi_0}\models\Delta$ for all finite $\Phi_0\subset\Phi$, any finite
subset of $\tilde{\Phi}$ is satisfiable. Hence by compactness of $\FO$, $\tilde{\Phi}$
is satisfiable by some relational pseudo-model $\Mod,s$.
\\Then \Cref{fragment} yields $\M,s\models\Phi$ for $\M,s:=\M(\Mod),s$ and
\Cref{pseudoclosuresame} entails $\M\!\downarrow,s\models\Phi$ for the inquisitive
closure $\M\!\downarrow,s$ of $\M, s$.
\end{proof}

With persistency it is easy to see that compactness over the class of all state-pointed
(pseudo-)models implies compactness over the class of all world-pointed (pseudo-)models.

\section{A Hennessy-Milner class for $\INQML$}
\label{HMsec}

The notion of inquisitive bisimulation 
from~\cite{CiardelliOtto} plays an essential r\^ole 
for~$\INQML$ as does ordinary bisimulation for plain modal logic~$\ML$
(for background cf.~\cite{GorankoOtto}).
In particular, inquisitive $n$-bisimulation equivalence $\sim_n$
between two inquisitive models represents a 
finite approximation to full inquisitive
bisimulation equivalence; at level~$n \in \N$, $\sim_n$ 
is related to $\INQML$-equivalence 
up to nesting depth~$n$ (of the modalities $\Box$ and $\boxplus$),
in the familiar style of an
Ehrenfeucht--Fra\"\i ss\'e\ correspondence for
$\INQML$~\cite{CiardelliOtto} (see Theorem~\ref{EFthm} below).
Full inquisitive bisimulation equivalence $\sim$ between 
world- or state-pointed inquisitive models is naturally defined in
terms of back\&forth systems, bisimulation relations, or winning
strategies for the defender (player~$\mathbf{II}$) in the 
corresponding infinite back\&forth game. The finite levels $\sim_n$
for $n \in \N$ are best understood as approximations to $\sim$ 
in terms of winning strategies for player~$\mathbf{II}$ 
in the $n$-round back\&forth game; 
the common refinement $\sim_\omega$ of all the finite 
levels $\sim_n$ is defined in terms of winning strategies 
for any finite number of rounds, which in general is
characteristically weaker than full $\sim$. The crucial feature of the
back\&forth games for inquisitive bisimulation is that each round
comprises two phases so that the probing of state-pointed 
positions is interleaved with a probing of intermediate world-pointed 
positions (for details of the game and a 
fuller discussion we refer to~\cite{CiardelliOtto}).

The following summarises the Ehrenfeucht--Fra\"\i ss\'e\
correspondence derived in~\cite{CiardelliOtto}. We write 
$\equiv_\INQML^n$ for $\INQML$-equivalence 
between world- or state-pointed inquisitive models
up to nesting depth~$n$, and use the term \emph{$\sim_n$-types} 
for $\sim_n$-equivalence classes (of world- or state-pointed)
inquisitive modal models. 

\begin{theorem}[inquisitive Ehrenfeucht--Fra\"\i ss\'e\ \cite{CiardelliOtto}]
\label{EFthm}
For world- or state-pointed inquisitive modal models over a 
finite signature and $n \in \N$:
\[
\begin{array}{rcl}
\M,w \sim_n \M',w' &\Leftrightarrow &
\M,w \equiv_\INQML^n \M',w', 
\\
\hnt
\M,s \sim_n \M',s' &\Leftrightarrow &
\M,s \equiv_\INQML^n\M',s'. 
\end{array}
\]
In particular, $\INQML$ is preserved under $\sim$, and $\sim_\omega$
coincides with $\INQML$-equivalence over finite signatures.
Moreover, for world- or state-pointed inquisitive modal models $\M,w$
or $\M,s$ over a finite signature, there are \emph{characteristic formulae}
$\chi_{\M,w}^n $and $\chi_{\M,s}^n$ of $\INQML$ that define $\sim_n$-types
(up to persistency, cf.\ Observation~\ref{props}) in the sense that
\[
\begin{array}{rcl}
\M',w' \models \chi_{\M,w}^n&\Leftrightarrow &
\M',w' \sim_n \M,w, 
\\
\hnt
\M',s' \models \chi_{\M,s}^n &\Leftrightarrow &
\M',s' \sim_n \M,s_0 \mbox{ for some } s_0 \subset s.
\end{array}
\]
\end{theorem}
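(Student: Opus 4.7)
The argument has two main strands which have to be carried out in parallel, by induction on~$n$: the preservation strand $\sim_n \Rightarrow \equiv_\INQML^n$ and the construction of characteristic formulas $\chi^n$ that yield the converse $\equiv_\INQML^n \Rightarrow \sim_n$. First I would set up the preservation direction by an outer induction on formula nesting depth. The base case, covering atomic $p$ and $\bot$, is handled by the world-phase of the game which matches propositional types. The propositional connectives $\wedge$, $\stor$, and $\rightarrow$ preserve nesting depth and carry over directly; the implication case is where one genuinely uses that state-level $\sim_n$ respects passage to subsets (from persistency). The real content is in the two modalities: for $\Box\psi$ at~$s$, the semantics sends us, for each $w \in s$, to $\sigma(w) = \bigcup \Sigma(w)$, which corresponds to one transition in the state-phase of the game; for $\boxplus\psi$, we move to each $t \in \Sigma(w)$ directly. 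Each such unfolding consumes exactly one round, matching $\sim_n$ to depth~$n$.

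Next I would construct the characteristic formulas $\chi_{\M,w}^n$ and $\chi_{\M,s}^n$ by simultaneous induction on~$n$, relying essentially on the finiteness of the signature to guarantee that at every level there are only finitely many $\sim_n$-types. At level~$0$ the world-formula is the conjunction of propositional literals fixing the type of $w$, and the state-formula compiles this pointwise over the elements of~$s$. For the inductive step, assuming defining formulas $\chi^n_\tau$ for each $\sim_n$-type~$\tau$, I form $\chi_{\M,w}^{n+1}$ as the conjunction of: the propositional type at $w$; a clause $\Box \chi^n_{\M,\sigma(w)}$ describing the successor information state; and, to enforce the profile of $\Sigma(w)$, one clause of the form $\boxplus \bigl( \chi^n_{\tau_1} \stor \cdots \stor \chi^n_{\tau_k} \bigr)$ confining the realised state-types together with a finite conjunction of existential-style clauses witnessing that each $\tau_i$ is indeed realised inside $\Sigma(w)$. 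The state-level $\chi_{\M,s}^{n+1}$ is then obtained via a $\stor$-enumeration over the realised world-types in~$s$. The ``up to persistency'' proviso in the theorem reflects the fact that state-formulas of $\INQML$ are persistent, so $\chi_{\M,s}^n$ can at best pin down the $\sim_n$-class of $\M,s$ modulo admission of arbitrary substates $s_0 \subset s$.

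The remaining clauses follow formally. Preservation under full $\sim$ is inherited from preservation under each $\sim_n$, since $\sim$ refines every $\sim_n$. Over a finite signature the set of $\INQML$-formulas of nesting depth $\leq n$ is finite modulo equivalence, so $\equiv_\INQML^n$ has finitely many classes that coincide with the $\sim_n$-types; intersecting over all $n$ gives the identification of $\sim_\omega$ with $\equiv_\INQML$.

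\textbf{Main obstacle.} The delicate step is the inductive construction of characteristic formulas, particularly at the state-pointed level. The two-phase back\&forth round interleaves probes at world- and state-level through $\Sigma(w)$, so the world- and state-characteristics must be built by mutual recursion with exactly the right offset in $n$. What makes this subtle is that $\boxplus$ imposes a universal constraint on the elements of $\Sigma(w)$ while $\stor$ is needed to witness existentially which $\sim_n$-types actually occur there; getting these two bounds to match, while also absorbing the persistency-induced asymmetry between state-level $\sim_n$ and $\INQML$-equivalence, is where one really has to argue carefully rather than just transcribe the classical Ehrenfeucht--Fra\"\i ss\'e\ template.
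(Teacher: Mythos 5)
Note first that the paper does not prove this theorem: it is imported, with attribution, from~\cite{CiardelliOtto}, and serves here only as background for the Hennessy--Milner section (what the paper actually uses are the characteristic formulae $\chi^{n}_{\M,w}$, $\chi^n_{\M,s}$ and their flatness/persistency properties). So there is no in-paper proof to compare your attempt against; the relevant comparison is with the proof in the cited work, of which your text is a plan rather than an execution.

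As a plan it has the right general shape: a preservation argument by induction on nesting depth, characteristic formulae built by mutual recursion on $n$, finiteness of the signature guaranteeing finitely many $\sim_n$-types at each level, and the ``up to persistency'' proviso at the state level forced by persistency of all $\INQML$-formulae (Observation~\ref{props}). But the two places where the proof has real content are exactly the places you leave open. First, $\INQML$ has no existential device: your ``existential-style clauses witnessing that each $\tau_i$ is realised inside $\Sigma(w)$'' cannot be written as stated, only simulated, e.g.\ by formulae of the shape $\neg\boxplus\neg\chi^n_{\tau_i}$ (with $\neg\xi:=\xi\rightarrow\bot$), and such a clause pins the witness down only up to persistency; threading this systematic slack through the mutual recursion and through the two-phase game rounds is the heart of the matter, and you flag it as the ``main obstacle'' without overcoming it. Second, your treatment of $\Box$ is too quick: the game moves from a world $w$ lead to states $t\in\Sigma(w)$, whereas $\Box\psi$ speaks about $\sigma(w)=\bigcup\Sigma(w)$, which need not itself belong to $\Sigma(w)$; both in the preservation direction and for your clause $\Box\,\chi^n_{\M,\sigma(w)}$ one has to reduce statements about $\sigma(w)$ to statements about finitely many worlds drawn from possibly different members of $\Sigma(w)$ (in the spirit of graded flatness, Proposition~\ref{gradedflatness}), and the clause as written, being persistent, only bounds $\sigma'(w')$ from one side, so the ``back'' half of the characterisation needs further clauses and argument. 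In short: you correctly identify the architecture of the cited proof and its delicate point, but the delicate point itself remains unproved, which is tolerable only because the statement is a quoted result rather than one the paper establishes.
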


We remark that the characteristic formulae $\chi_{\M,w}^n$, for $\sim_n$-types
of worlds, are \emph{truth-conditional} (in the terminology
of~\cite{Ciardelli,CiardelliOtto}) or \emph{flat} (in the terminology of team
semantics). This means that 
\[
\begin{array}{r@{\;\;\;\Leftrightarrow\;\;\;}l}
\M',s' \models \chi_{\M,w}^n
&
\M',\{ w' \} \models \chi_{\M,w}^n
\mbox{  for all } w' \in s'
\\
\hnt
&
\M',w' \models \chi_{\M,w}^n \;
\mbox{  for all } w' \in s'.
\end{array}
\]

For plain modal logic $\ML$ and many of its relatives, the significant gap
between $\sim$ and $\sim_\omega$, and between $\sim_\omega$ and $\equiv_\ML$
for infinite signatures, is bridged by modally saturated and in particular
by $\omega$-saturated models. This phenomenon, which for basic modal logic
is (a generalisation of) the well-known Hennessy--Milner theorem, is a crucial
tool in the model theory of modal logic~\cite{GorankoOtto}: $\omega$-saturated
Kripke structures are modally saturated and satisfy the Hennessy--Milner
correspondence
\[
\mathcal{K},w  \sim \mathcal{K}',w' 
\; \Leftrightarrow \;
\mathcal{K},w  \sim_\omega \mathcal{K}',w' 
\; \Leftrightarrow \;
\mathcal{K},w  \eqml \mathcal{K}',w'.
\]

We here establish an inquisitive version of this Hennessy-Milner phenomenon,
which technically works with $\omega$-saturated relational pseudo-models and
our standard translation. It turns out that the passage through pseudo-models
is absolutely crucial for the argument (cf.~Observation~\ref{omegadownarrowclashobs}).

\medskip
In what follows we could consider signatures of arbitrary infinite cardinalities,
i.e.\ with arbitrary index sets $I$ for the family of basic propositions
$(p_i)_{i \in I}$; but to simplify the formal account, and without any essential
loss of generality, we stick to at most countably infinite~$I$. Consider
$\INQML$-types of worlds of inquisitive (pseudo-)models: for a world $w \in W$ in
the (pseudo-)model $\M$, let
$\rho_w:=\{\phi\in\INQML\colon\M,w\models\phi\}$
be its $\INQML$-type. Even if the overall signature is infinite, 
any individual $\INQML$- or $\FO$-formula can only refer to finitely many basic
propositions (or their relational counterparts). In this sense we have 
$\INQML = \bigcup_k \INQML_k$ for its fragments $\INQML_k$ that use just the first
$k$ basic propositions from $I_k \subset I$, for $k \in \N$. The $\INQML$-type of
a world in a (relational) pseudo-model therefore always is the union of its
restrictions to finite sub-signatures, which are types in corresponding reducts.

For bisimulation types, i.e.\ equivalence classes w.r.t.\ $\sim$ or $\sim_n$,
however, the situation is different. If we denote as $\sim_{n,k}$ the notion of
$n$-bisimilarity of the $I_k$-reducts of models, then even at the level of $n=1$,
all the $\sim_{1,k}$-types do not determine the $\sim_1$-type of a world $w$, as
the former do e.g.\ not determine whether $w$ has a Kripke-successor in which
infinitely many basic propositions are satisfied simultaneously.

In the following we want to consider standard translations of $\INQML$-formulae on
world-pointed relational inquisitive models and therefore use the
world-version of the standard translation from~\Cref{worldstandardtranslation}
whenever this is appropriate.
With a world $w$ in some relational pseudo-model we associate 
its complete $\INQML$-type $\rho_w$ with a partial first-order type
$\rho^\ast_w(x)$
consisting of the standard translations of all formulae in $\rho_w$.
So $\rho_w$ is determined by (the standard translations of) the characteristic
formulae $\chi^{n,k}_w\in\INQML_k$ for the $\sim_n$-types of $w$ in
the $I_k$-reducts,
i.e.\ the $\sim_{n,k}$-types, for all $n,k \in \N$, by Theorem~\ref{EFthm}.

From now on, for reasons of simplicity, we identify $\INQML$-formulae like
$\chi^{n,k}_v$ with their standard translation in $\FO$,
which is expressed as $\chi^{n,k}_v(x)$ in a first-order variable
of the first sort, or as $\chi^{n,k}_v(\lambda)$ in a first-order
variable of the second sort in case we consider state-pointed relational
inquisitive models.
 
Recall from classical model theory 
(see~e.g.~\cite{HodgesMT}) that a first-order structure is
\emph{$\omega$-saturated} if it realises every first-order type with 
finitely many parameters.
Generally, a (partial) \emph{first-order type} of a $\sigma$-structure $\mathcal{A}$ 
is a set of $\FO(\sigma)$-formulae $\Phi$ in some tuple of 
free variables that is consistent with the complete 
$\FO(\sigma)$-theory of $\mathcal{A}$; a tuple of elements of
$A$ \emph{realises} $\Phi$ if all $\phi \in \Phi$ are satisfied 
by this tuple.
A (partial) type with parameters~$\mathbf{a}$ from $A$ is a (partial) type 
of the expansion of $\mathcal{A}$ with new constant
symbols for the $a \in \mathbf{a}$, so that $\Phi_{\mathbf{a}}$ can
specify first-order properties in relation to these parameters in
$(\mathcal{A},\mathbf{a})$.\footnote{
We here look at partial $\FO$-types in the  
two-sorted setting of relational pseudo-models. These may specify 
properties of worlds (in free variables of the first sort) as well as
of information states (in free variables of the second sort), and
could involve parameters from both sorts. As it turns out, we 
actually just need to consider partial types of individual elements
(of either sort) with single parameters from the other sort for our purposes.}

Classical chain constructions based on $\FO$ compactness establish that
any first-order structure admits elementary extensions that are
$\omega$-saturated. Compactness is also at the heart of the straightforward
argument for the following fact, to be used in the proof of our main result
of this section.

In the following, we say that an element $s\in\Po(W)$ is \emph{represented} in the
second sort of a relational pseudo-model if $s\in S\subseteq\Po(W)$.

\begin{lemma}
\label{omegatyperem}
Let $s \in S$ be represented in the second sort of an 
$\omega$-saturated relational pseudo-model $\Mod$,
$\rho_v(x)$ as above (not necessarily realised in $\Mod$). 
Then $\rho_v(x)$ is realised by a world in $s$ in $\Mod$ iff 
\[
\Mod,s \models \exists x \left( x\!\in\! s \wedge \chi^{n,k}_v(x) \right)
\; \mbox{ for all $k,n \in \N$.}
\]
\end{lemma}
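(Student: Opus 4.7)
The forward direction should be almost a one-liner: if a world $w\in s$ realises $\rho_v^*(x)$, then in particular it satisfies the standard translation of each characteristic formula $\chi^{n,k}_v$, since $v$ satisfies its own characteristic formulae (instantiating Theorem~\ref{EFthm} with $\M',v'=\M,v$ gives $\chi^{n,k}_v\in\rho_v$). Hence $\Mod,s\models\exists x(x\!\in\!s\wedge\chi^{n,k}_v(x))$ for all $n,k$.

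For the converse the plan is to exploit $\omega$-saturation. I would form the partial first-order type
\[
\Phi(x) \;:=\; \{x\!\in\!\lambda\}\cup\rho_v^*(x),
\]
with the single parameter $\lambda$ interpreted as $s\in S$, and show it is finitely realised in $(\Mod,s)$. Then $\omega$-saturation of $\Mod$ produces a world $w$ realising $\Phi(x)$, which by construction lies in $s$ and realises $\rho_v(x)$. So the whole task reduces to verifying finite satisfiability.

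Given a finite $\Phi_0\subset\rho_v^*(x)$, the formulas in $\Phi_0$ come from finitely many $\phi\in\rho_v$, all of which use only propositions from some $I_k$ and have nesting depth at most some $n$. The hypothesis provides a witness $w\in s$ with $\Mod,w\models\chi^{n,k}_v(x)$; I then want to argue that this $w$ satisfies every formula in $\Phi_0$. The bridge is: pull $\chi^{n,k}_v$ back to the associated pseudo-model via Remark~\ref{worldfragment}, move to its inquisitive closure $\M(\Mod)\!\downarrow$ using Proposition~\ref{pseudoclosuresame}, apply Theorem~\ref{EFthm} to conclude $\M(\Mod)\!\downarrow,w\sim_{n,k}\M,v$, and hence $\M(\Mod)\!\downarrow,w\equiv_{\INQML_k}^{n}\M,v$. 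Any $\phi\in\rho_v\cap\INQML_k$ of depth $\leq n$ therefore holds at $w$, and pushing back along the same two bridges yields $\Mod,w\models\phi^*(x)$.

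I expect the main subtlety to be the careful shuttle between (a) the relational pseudo-model~$\Mod$, where $\omega$-saturation and the first-order types live, (b) the pseudo-model~$\M(\Mod)$, where $\INQML$-semantics is evaluated via the standard translation, and (c) its inquisitive closure $\M(\Mod)\!\downarrow$, to which Theorem~\ref{EFthm} actually applies. The combination of Proposition~\ref{fragment} (in its world-pointed form) with Proposition~\ref{pseudoclosuresame} makes this shuttle routine, and the observation that any finite piece of $\rho_v^*$ lives in some $\INQML_k$ at bounded depth $n$ is precisely what lets a single characteristic formula $\chi^{n,k}_v$ serve as a finite approximation to the full type -- which is exactly the ingredient needed for finite satisfiability and hence for the final appeal to $\omega$-saturation.
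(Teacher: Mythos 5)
Your proof is correct and is essentially the argument the paper leaves implicit (the lemma is stated without proof, with the remark that compactness is at the heart of a straightforward argument): the characteristic formulae $\chi^{n,k}_v$ act as finite approximants of $\rho_v^*$, so your hypothesis yields finite realisability of $\{x\!\in\!\lambda\}\cup\rho_v^*(x)$ in $(\Mod,s)$, which by compactness is a partial type with the single parameter $s$ and is then realised by $\omega$-saturation. Your shuttle between $\Mod$, $\M(\Mod)$ and $\M(\Mod)\!\downarrow$ via Remark~\ref{worldfragment}, Proposition~\ref{pseudoclosuresame} and Theorem~\ref{EFthm} (applied to the finite $I_k$-reducts) is exactly the bookkeeping needed.
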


This lemma
implies in particular that $\omega$-saturation 
is at odds with full downward closure.
This is the reason why our inquisitive version of the Hennessy--Milner
correspondence must be based on $\omega$-saturated relational 
pseudo-models rather than relational encodings of proper models.

\begin{observation}
\label{omegadownarrowclashobs}
In general, $\omega$-saturation is 
incompatible with the $\downarrow$-closure condition on information
states.
It requires \emph{all}
information states $s$ that are represented in the second sort to be 
closed under ``limits'' of partial $\sim$-types of worlds $w \in s$.
For instance, one cannot have the analogue of 
the information state ``all Kripke-paths from worlds $w\in s$ are finite'' 
without imposing a uniform finite bound on the lengths of all these paths.
\end{observation}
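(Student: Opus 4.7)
The plan is to establish the two claims of the observation separately. For the \emph{closure under limits} property, I would argue directly from Lemma~\ref{omegatyperem}: suppose $s \in S$ is represented in the second sort of an $\omega$-saturated relational pseudo-model $\Mod$, and let $\rho_v$ be the $\INQML$-type of some world~$v$ (in $\Mod$ or in another pseudo-model) such that, for each pair $n,k$, the characteristic formula $\chi^{n,k}_v$ is satisfied by \emph{some} world in $\underline{s}$ (the witness being allowed to depend on $n,k$). Then the right-hand side $\Mod,s \models \exists x(x\!\in\!\lambda \wedge \chi^{n,k}_v(x))$ of the lemma is fulfilled for all $n,k$, and the lemma yields a single $w^* \in \underline{s}$ realising all of $\rho_v$. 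In particular, $\underline{s}$ is forced to contain the limits, under $\sim$-types, of sequences drawn from itself.

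For the example exhibiting the clash with $\downarrow$-closure, I would make the Kripke-path-length setup explicit. Use a countable supply of basic propositions $(p_n)_{n \in \N}$, interpreted so that $p_n$ holds exactly at worlds from which some Kripke-path of length at least~$n$ is reachable. The partial $\FO$-type $\pi(x) := \{ p_n(x) \colon n \in \N \}$ then captures ``an infinite outgoing Kripke-path starts at $x$.'' Now suppose for contradiction that an $\omega$-saturated relational \emph{model}~$\Mod$ (thus subject to $\downarrow$-closure) admits $s \in S$ whose underlying set $\underline{s}$ consists of worlds with only finite outgoing Kripke-paths but of unbounded length: each finite subset of $\pi(x)$ is realised in $\underline{s}$, so by $\omega$-saturation some $w^* \in \underline{s}$ realises all of $\pi(x)$, contradicting the assumption that $w^*$ has only finite outgoing paths. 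This gives a concrete witness to the non-coexistence of the two closure properties.

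The main obstacle is explaining why the obstruction is specifically tied to the $\downarrow$-closure condition rather than to $\omega$-saturation alone, since the argument above would seem to work in any $\omega$-saturated relational structure. The point to draw out is that, in a pseudo-model, one is free to choose the second sort $S$ narrowly enough to avoid containing such an $s$ altogether; but in a proper relational model the $\downarrow$-closure condition forces $S$ to contain every subset of the underlying set of each $s \in E[w]$. So if $E[w]$ contains any state at all whose natural downward closure gives rise to an $s$ with the above properties, that $s$ is inescapably represented in $S$ and the contradiction becomes unavoidable. In this sense, pseudo-models form precisely the slack that absorbs the tension between the non-elementary inquisitive closure condition and the classical saturation property.
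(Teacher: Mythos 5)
Your first paragraph is fine, but it only restates what Lemma~\ref{omegatyperem} (and the sentence immediately preceding the observation) already says. The substance of the observation is the L\"ob-style example, and there your argument has a genuine gap: realising the full type $\pi(x)=\{p_n(x)\colon n\in\N\}$ at some $w^*\in\underline{s}$ only tells you that $w^*$ has finite outgoing Kripke-paths of every finite length; it does \emph{not} produce an infinite outgoing path, and hence it does \emph{not} contradict the hypothesis that all outgoing paths from $w^*$ are finite. (Consider a world whose Kripke-successors are the roots of disjoint finite chains of lengths $1,2,3,\ldots$: it satisfies all your $p_n$, yet every path from it is finite.) The non-equivalence of ``arbitrarily long finite paths'' and ``some infinite path'' is precisely the non-first-order character of well-foundedness that the observation trades on, so the step ``$w^*$ realises all of $\pi(x)$, contradicting finiteness of its paths'' is exactly the step that needs an argument rather than an assertion.

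The missing idea --- and the paper's actual argument --- is to \emph{iterate} $\omega$-saturation. Having found (as you do) some $w_0\in\underline{s}$ with outgoing paths of unbounded finite length, consider the partial type with parameter $w_0$ consisting, for all $n\in\N$, of the formulae ``$y$ is a Kripke-successor of $w_0$ and has an outgoing path of length at least $n$''; all of this is directly $\FO$-expressible over the relational encoding via $E$ and $\epsilon$, so your auxiliary propositions $p_n$ are unnecessary (and they quietly restrict attention to models with a specially stipulated valuation). This type is finitely satisfiable because $w_0$ has paths of length at least $n+1$, so by $\omega$-saturation it is realised by a successor $w_1$, which again has unbounded-length paths in front of it; inductively one obtains an infinite Kripke-path $w_0,w_1,w_2,\ldots$ starting at $w_0\in\underline{s}$, and only this contradicts ``all Kripke-paths from worlds in $s$ are finite''. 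Your closing paragraph about pseudo-models providing the slack (the second sort need not represent such an $s$, whereas downward closure in proper relational models can force it to) is a reasonable gloss consistent with the paper's framing, but it does not repair this central step.
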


Indeed, an information state whose worlds admit arbitrarily long but
just finite Kripke-paths (L\"ob condition) 
cannot be represented in an $\omega$-saturated pseudo-model, because
any such state would also have to contain a world with an infinite Kripke-path.
This follows as $\omega$-saturation would inductively always provide a next
Kripke-successor that still has Kripke-paths of unbounded finite lengths in front of it.

\medskip
We define an auxiliary
equivalence relation between state-pointed pseudo-models as follows:
$\M,s$ and $\M',s'$ are \emph{bulk-equivalent},
\[
(\dagger) \quad \M,s \eqinqmlbulk\M',s',
\]
if for every $w\in s$ there is some $w'\in s'$ such that
$\M,w  \eqinqml \M',w'$, and vice versa.
In general, $\eqinqmlbulk$
is a proper strengthening of $\eqinqml$; the two do coincide, however,
for $\omega$-saturated pseudo-models and for information states that are
represented in the second sort. In that situation,
ordinary $\INQML$-equivalence reflects the natural relationship between state- and
world-based inquisitive bisimulation: state-equivalence
$\M,s \eqinqml \M',s'$
as a flat lifting of world-equivalence,
in the spirit of our bulk equivalence.

\begin{remark}
\label{IMLQbisimflatnesslem}
For state-pointed pseudo-models $\M,s$ and $\M',s'$ that 
stem from $\omega$-saturated relational state-pointed pseudo-models
$\Mod,s$ and $\Mod',s'$ with $s$ and $s'$ being represented in the second sort of
$\Mod$ and $\Mod'$, respectively, the following are equivalent:
\bre
\item $\M\!\downarrow,s\eqinqml \M'\!\downarrow,s'$;
\item $\M\!\downarrow,s \eqinqmlbulk \M'\!\downarrow,s'$,
i.e., for every $w\in s$ there is some $w'\in s'$ such that $\M,w  \eqinqml
\M',w'$, and vice versa.
\ere
\end{remark}

\begin{proof}
Note that references to inquisitive closures $\M\!\downarrow,s$ 
could be replaced by $\M,s$ in both~(i) and~(ii), as generally 
$\M,s \eqinqml \M\!\downarrow,s$ by~Proposition~\ref{pseudoclosuresame}.

The implication (ii)~$\Rightarrow$~(i) is obvious in light of the definition of the
semantics of $\INQML$ (over pseudo-models) in combination with its 
graded flatness. 
This implication does not require $\omega$-saturation.

For~(i)~$\Rightarrow$~(ii), let $\M,s \eqinqml \M',s'$ and consider
the claim in~(ii) e.g.\ for $w \in s$. On the basis of $\omega$-saturation and
Lemma~\ref{omegatyperem}, it suffices to show that for all $n,k \in
\N$, there is some $w' \in s'$ s.t.\ $\M',w' \models \chi_w^{n,k}$. 
If this were not the case for some $n,k$, 
then $\M',s' \models \neg \chi_w^{n,k}$, which is impossible since 
obviously not $\M,s \models \neg \chi_w^{n,k}$ (cf.\ remarks on the
truth-conditional, flat nature of $\chi^{n,k}_w$ in connection with
Theorem~\ref{EFthm} above).
\end{proof}

\begin{lemma}
\label{approxtosimlem}
For pairs of state-pointed pseudo-models $\M,s$ and $\M',s'$
stemming from $\omega$-saturated relational pseudo-models
$\Mod$ and $\Mod'$:
\[
\M \!\downarrow,s \eqinqmlbulk \M' \!\downarrow,s' 
\; \Rightarrow \;
\M \!\downarrow,s  \sim \M'\!\downarrow,s'.
\]
\end{lemma}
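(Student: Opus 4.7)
The plan is to exhibit a back-and-forth system witnessing $\M\!\downarrow,s \sim \M'\!\downarrow,s'$, built directly from the equivalences already in hand. The candidate system will consist of a world-level relation $Z_w := \{(v,v') : \M,v \eqinqml \M',v'\}$ and a state-level relation $Z_s := \{(t,t') : \M\!\downarrow,t \eqinqmlbulk \M'\!\downarrow,t'\}$, with the initial position $(s,s')$ sitting in $Z_s$ by hypothesis. The state-to-world clauses of the bisimulation game come for free from the definition of bulk-equivalence: a spoiler pick $v \in t$ is answered by some $v' \in t'$ with $\M,v \eqinqml \M',v'$, placing $(v,v')$ into $Z_w$; the back direction is symmetric.

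The technical heart of the argument will be the world-to-state clause: given $(v,v') \in Z_w$ and $u \in \Sigma\!\downarrow(v)$, produce $u' \in \Sigma'\!\downarrow(v')$ with $(u,u') \in Z_s$. My strategy is to first fatten $u$ to some $t \in \Sigma(v)$ with $u \subseteq t$ (available by definition of inquisitive closure), then locate a $t' \in \Sigma'(v')$ that is bulk-equivalent to $t$, and finally restrict to $u' := \{w' \in t' : \exists w \in u,\; \M,w \eqinqml \M',w'\}$. Then $u' \subseteq t'$ forces $u' \in \Sigma'\!\downarrow(v')$, and bulk-equivalence of $(u,u')$ is immediate from bulk-equivalence of $(t,t')$ together with the construction of $u'$.

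To obtain the matching $t'$ I would pass to the partial first-order type in the free state variable $\mu$ over $\Mod'$ with parameter $v'$,
\[
\Psi(\mu) := \{Ev'\mu\} \,\cup\, \bigl\{\exists x\,(x\!\in\!\mu \wedge \chi_w^{n,k}(x)) : w \in t,\, n,k \in \N\bigr\} \,\cup\, \bigl\{\forall x\,(x\!\in\!\mu \to \textstyle\bigvee_{\tau \in T_{n,k}}\chi_\tau^{n,k}(x)) : n,k \in \N\bigr\},
\]
where $T_{n,k}$ is the (finite) set of $\sim_{n,k}$-types realised by worlds of $t$. Finite satisfiability of $\Psi$ in $\Mod'$ I would derive from Theorem~\ref{EFthm}: since $\M,v \eqinqml \M',v'$, for every $n$ and $k$ the inquisitive bisimulation condition at $(v,v')$ (at a sufficiently high level) yields a $t^{**} \in \Sigma'(v')$ with $\M,t \sim_{n,k} \M',t^{**}$, and any such $t^{**}$ chosen for $n,k$ large enough verifies any prescribed finite fragment of $\Psi$. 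By $\omega$-saturation of $\Mod'$, the type is realised by some $t' \in \Sigma'(v')$.

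The main obstacle will be extracting genuine bulk-equivalence of $(t,t')$ from this realisation, as $\Psi$ only records level-wise type information whereas $Z_s$ demands a full $\eqinqml$-witness for every world on either side. To close this gap I would invoke $\omega$-saturation once more, along the lines of Lemma~\ref{omegatyperem}: for each $w \in t$ the first-order type $\{x\!\in\!\mu\} \cup \{\chi_w^{n,k}(x) : n,k \in \N\}$ over $\Mod'$ with $\mu$ instantiated by $t'$ is finitely satisfiable thanks to the $\exists$-clauses of $\Psi$, and hence realised by some $w' \in t'$ with $\M,w \eqinqml \M',w'$; symmetrically in $\Mod$, using the $\forall$-clauses of $\Psi$ to force every world of $t'$ to have its $\INQML$-type realised in $t$. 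This secures bulk-equivalence of $(t,t')$ and completes the verification that $Z$ is an inquisitive bisimulation on $\M\!\downarrow$ and $\M'\!\downarrow$.
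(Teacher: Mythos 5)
Your overall architecture is the same as the paper's: maintain bulk-equivalence at state positions and $\eqinqml$ at world positions, handle the state-to-world move by the definition of $\eqinqmlbulk$, and handle the world-to-state move by realising a partial type over the $\omega$-saturated pseudo-model and then pruning. But the middle step of your world-to-state move contains a genuine gap, and in fact the intermediate claim is false as stated. You want some $t'\in\Sigma'(v')$ that is \emph{fully} bulk-equivalent to $t\in\Sigma(v)$, and to get it you add the universal clauses $\forall x\,(x\!\in\!\mu\rightarrow\bigvee_{\tau\in T_{n,k}}\chi^{n,k}_\tau(x))$ to $\Psi(\mu)$. Finite satisfiability of $\Psi$ then breaks: from $\M,v\eqinqml\M',v'$ the Ehrenfeucht--Fra\"\i ss\'e\ correspondence only gives, for $t\in\Sigma\!\downarrow\!(v)$, a matching state $t^{**}\in\Sigma'\!\downarrow\!(v')$ with $\M\!\downarrow,t\sim_{n,k}\M'\!\downarrow,t^{**}$ --- \emph{not} $t^{**}\in\Sigma'(v')$, and $t^{**}$ need not be represented in the second sort of $\Mod'$ at all, so it cannot witness the clause $Ev'\mu$. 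To get a legitimate witness one must pass to a superset $\hat t'\in\Sigma'(v')$ with $t^{**}\subset\hat t'$ (this is exactly what the paper does); the existential clauses survive this passage, but your universal clauses do not, since $\hat t'$ may contain worlds whose $\sim_{n,k}$-types are not realised in $t$. Indeed the target of the step can simply fail: take $\Sigma(v)=\{\{a\},\{a,b\}\}$ and $\Sigma'(v')=\{\{a',b'\}\}$ with $a\sim a'$, $b\sim b'$ and $a\not\sim b$ (finite, hence $\omega$-saturated, encodings exist). Then $v\eqinqml v'$, but for $t=\{a\}\in\Sigma(v)$ no element of $\Sigma'(v')$ is bulk-equivalent to $t$, and your type $\Psi$ is not even finitely satisfiable.

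The repair is to aim lower, and you already have the needed idea in your last step. Drop the universal clauses and realise only
\[
\{Ev'\mu\}\;\cup\;\bigl\{\exists x\,(x\!\in\!\mu\wedge\chi^{n,k}_w(x))\colon w\in t,\ n,k\in\N\bigr\},
\]
whose finite satisfiability goes through via the superset trick above. If $\tilde t'$ is a realisation, then by $\omega$-saturation (Lemma~\ref{omegatyperem}) every $w\in u$ has some $w'\in\tilde t'$ with $\M,w\eqinqml\M',w'$; now prune, setting $u':=\{w'\in\tilde t'\colon \exists w\in u,\ \M,w\eqinqml\M',w'\}$. Downward closure gives $u'\in\Sigma'\!\downarrow\!(v')$ because $\tilde t'\in\Sigma'(v')$, the forward half of $\M\!\downarrow,u\eqinqmlbulk\M'\!\downarrow,u'$ comes from the realised existential clauses, and the backward half holds by construction of $u'$ --- no symmetric saturation argument in $\Mod$, and no universal clauses, are needed. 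This is precisely the paper's proof; your version differs only in demanding a represented, fully bulk-equivalent $E$-successor, which is both unnecessary and unattainable.
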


Note that the information states $s \subset W$ and $s' \subset W'$
in the premise of the claim of the lemma are arbitrary information
states and need not themselves be represented in the second sort of
the underlying relational pseudo-models $\Mod$ and $\Mod'$.
However, by~\Cref{IMLQbisimflatnesslem}, \Cref{approxtosimlem} implies
the same conclusion for states $s$ and $s'$ that are represented in the
second sort of $\Mod$ and $\Mod'$, respectively, under the weaker
assumption that $\M \!\downarrow,s \eqinqml \M' \!\downarrow,s' $.

\begin{proof}
The idea is to represent or simulate the infinite bisimulation game
from~\cite{CiardelliOtto} on $\M\!\downarrow;\M'\!\downarrow$ in terms
of the underlying relational pseudo-models $\Mod;\Mod'$. 
The challenge lies in dealing with those information states that may
essentially occur in the bisimulation game without being represented
in the second sort of the underlying relational pseudo-models.
\par
For the claim of the lemma it suffices to show that player~$\mathbf{II}$
can maintain~$\eqinqmlbulk$ through a single round in the following sense:
starting from a state position
$\bigl(\M\!\downarrow,s;\M'\!\downarrow,s'\bigr)$ with $\M\!\downarrow,s\eqinqmlbulk\M'\!\downarrow,s'$, 
player~$\mathbf{II}$ can respond to any world challenge from player~$\mathbf{I}$ to
get to a world position
$\bigl(\M\!\downarrow,w;\M'\!\downarrow,w'\bigr)$ such that $\M\!\downarrow,w\eqinqml\M'\!\downarrow,w'$,
and from there respond to any state challenge from player~$\mathbf{I}$ to
get again to a state position
$\bigl(\M\!\downarrow,t;\M'\!\downarrow,t'\bigr)$ such that $\M\!\downarrow,t\eqinqmlbulk\M'\!\downarrow,t'$.
\par
Let $\M\!\downarrow,s \eqinqmlbulk \M'\!\downarrow,s'$ and assume
w.l.o.g.\ that player~$\mathbf{I}$ picks an element $w \in s$;
then by the definition of $\eqinqmlbulk$ at $(\dagger)$, 
player~$\mathbf{II}$ can respond with some $w' \in s'$
with $\M\!\downarrow,w\eqinqml\M'\!\downarrow,w'$.
If player~$\mathbf{I}$ now picks (again w.l.o.g.)\
an information state $t\in\Sigma\!\downarrow\!(w)$ in $\M\!\downarrow$,
we need player~$\mathbf{II}$ to respond with 
some $t' \in \Sigma'\!\downarrow\!(w')$ in $\M'\!\downarrow$
such that $\M\!\downarrow,t \eqinqmlbulk \M'\!\downarrow,t'$. 
\par
For this consider the partial $\FO$-type describing the essential 
properties of $t$ (in variable $\lambda$ of the second sort) in
$\Mod$ (with parameter~$w$):\footnote{
This partial type is an approximation of the bisimulation type of $t$,
in two senses: it only stipulates positive assertions about bisimulation
types of worlds~$v$ in~$t$; and each such $\sim$-type of a world~$v$
can only be represented by its $\sim_{n,k}$-approximants.}
\[
\Phi_w(\lambda) := \{ E w\lambda \} \cup 
\left\{ \exists x \left(  x \in \lambda \wedge \chi_v^{n,k} (x) \right)\colon 
n,k \in \N, v \in t \;\right\},
\]
where $\chi_v^{n,k}(x)$ is (the standard translation of) the
$\INQML$-formula that characterises the $\sim_{n,k}$-type 
of $\M\!\downarrow, v$.
We want to show that $\Phi_{w'}(\lambda)$ (with parameter~$w'$) is finitely satisfiable in $\Mod'$ and hence a partial type of
$\Mod'$. It suffices to show that $\Mod',w'$ satisfies every formula of the form
\[ \phi(x)=
\exists \lambda 
\Bigl(  E x \lambda \wedge
\bigwedge_{i\leq m}
\exists y
\bigl( y\!\in\! \lambda \wedge \chi_{v_i}^{n,k}(y)\bigr)\Bigr),
\]
for $n,m,k \in \N$ and $v_1,\ldots,v_m \in t$. For this claim we can use
the fact that $\M,w \eqinqml \M',w'$, which implies that
$\M\!\downarrow,w \sim_{n+1,k}\M'\!\downarrow,w'$.
So for the state $t\in\Sigma\!\downarrow\!(w)$ there is some $t^*\in \Sigma'\!\downarrow\!(w')$
in $\M'\!\downarrow$ with $\M\!\downarrow,t \sim_{n,k} \M'\!\downarrow,t^*$.
Let $\hat{t}'\in \Sigma'(w')$ be an information state in the pseudo-model $\M'$ 
that is represented in the second sort of $\Mod'$, such that $t^*\subset\hat{t}'$.
Then for every $v\in t$ there is some $v'\in t^*\subset\hat{t}'$ with
$\M\!\downarrow,v \sim_{n,k} \M'\!\downarrow,v'$, hence with
$\M'\!\downarrow,v'\models\chi_{v}^{n,k}(y)$. So $\Mod',w'\models\phi(x)$,
with $\hat{t}'$ as an existential witness for $\lambda$. Then
$\Phi_{w'}(\lambda)$ is consistent with the $\FO$-theory of $\Mod',w'$
(i.e.\ a partial type with parameter $w'$ in $\Mod'$).
\par
By $\omega$-saturation of~$\Mod'$ there is a
realisation $\tilde{t}'$ of $\Phi_{w'}(\lambda)$. So
(by~\Cref{omegatyperem} and using $\omega$-saturation again)
there is,  for every world $v\in t$, some world $v'\in \tilde{t}'$ that realises
its $\INQML$-type $\rho_v$, i.e.\ such that
$\M\!\downarrow,v \eqinqml \M'\!\downarrow,v'$. Now 
\[
t':=\{v'\in\tilde{t}'\colon\exists v\in t\text{ with }\M\!\downarrow,v
\eqinqml \M'\!\downarrow,v'\} \subset \tilde{t}'
\]
is an information state in $\Sigma'\!\downarrow\!(w')$, as
$\tilde{t}'\in\Sigma'(w')$ in $\M'$ ($\tilde{t}' \in E[w']$ in $\Mod'$).
It follows that $t' \in \Sigma'\!\downarrow\!(w')$ is an appropriate
response to match $t \in \Sigma\!\downarrow\!(w)$. Indeed, 
for every world $v \in t$  there is a world $v' \in t'$ and vice versa, with 
$\M\!\downarrow,v \eqinqml \M'\!\downarrow,v'$. So
we have $\M\!\downarrow,t \eqinqmlbulk \M'\!\downarrow,t'$, and $t'$
is as desired. 
\end{proof}

\Cref{approxtosimlem} does give rise to a nice class of 
models with a Hennessy--Milner property for $\INQML$.

\begin{corollary}[a Hennessy--Milner class for $\INQML$]
\label{hmclass}
\mbox{}\\
If $\M = \M(\Mod)$ and $\M'= \M(\Mod')$ arise as pseudo-models
from $\omega$-saturated relational pseudo-models $\Mod$ and $\Mod'$,
then their $\downarrow$-closures, which are proper models, satisfy the
following Hennessy--Milner property for bulk-equivalent\footnote{See
$(\dagger)$ for the definition of $\eqinqmlbulk$ as the
  flat lifting of $\eqinqml$ and see also Remark~\ref{IMLQbisimflatnesslem}.}
states:
\[
\M\!\downarrow, s\eqinqmlbulk
\M'\!\downarrow, s'
\;\; \Leftrightarrow \;\;
\M\!\downarrow, s \;\sim\; 
\M'\!\downarrow, s'.
\]
Similarly, for worlds:
\[
\M\!\downarrow, w\eqinqml
\M'\!\downarrow, w'
\;\; \Leftrightarrow \;\;
\M\!\downarrow, w \;\sim\; 
\M'\!\downarrow, w'.
\]
\end{corollary}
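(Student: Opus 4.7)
The plan is to derive Corollary~\ref{hmclass} as a near-immediate consequence of Lemma~\ref{approxtosimlem} combined with the $\INQML$-preservation assertion from Theorem~\ref{EFthm}. In both the state-pointed and the world-pointed version, one implication is supplied by the lemma and the other follows from the fact that $\INQML$ is preserved under $\sim$.

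For the state-pointed equivalence, I would first dispatch the direction from bulk-equivalence to bisimilarity: given $\M\!\downarrow, s \eqinqmlbulk \M'\!\downarrow, s'$, Lemma~\ref{approxtosimlem} applies directly and yields $\M\!\downarrow, s \sim \M'\!\downarrow, s'$; note that the lemma explicitly permits the states $s$ and $s'$ to be arbitrary and not represented in the second sort of $\Mod$ or $\Mod'$. For the converse, I would unpack the structure of the inquisitive back\&forth game: from a state position $(\M\!\downarrow, s; \M'\!\downarrow, s')$ that is winning for player~$\mathbf{II}$, any world-challenge by~$\mathbf{I}$, say picking $w \in s$, must be met by some $w' \in s'$ from which the game remains winning, i.e.\ $\M\!\downarrow, w \sim \M'\!\downarrow, w'$; preservation under $\sim$ from Theorem~\ref{EFthm} then gives $\M\!\downarrow, w \eqinqml \M'\!\downarrow, w'$, and symmetrically for $w' \in s'$. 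This is exactly the defining condition of $\eqinqmlbulk$ at~$(\dagger)$.

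For the world-pointed equivalence, I would reduce to the state-pointed case via singleton information states. With the conventional identification of $\M, w$ and $\M, \{w\}$, the hypothesis $\M\!\downarrow, w \eqinqml \M'\!\downarrow, w'$ literally matches $\M\!\downarrow, \{w\} \eqinqmlbulk \M'\!\downarrow, \{w'\}$, since each side contains only a single world to match. Lemma~\ref{approxtosimlem} then delivers $\M\!\downarrow, \{w\} \sim \M'\!\downarrow, \{w'\}$, i.e.\ $\M\!\downarrow, w \sim \M'\!\downarrow, w'$. The reverse implication is again a direct instance of preservation of $\INQML$ under~$\sim$.

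There is no real obstacle beyond being careful about two small points: first, that Lemma~\ref{approxtosimlem} does apply to information states (in particular singletons) that are not themselves represented in the second sort, which the remark immediately following that lemma confirms; and second, that in unpacking $\sim$ in the easy direction one must appeal to the two-phase (state-then-world) structure of the inquisitive back\&forth game from~\cite{CiardelliOtto} rather than to any ad hoc relational characterisation. With these points in place, the corollary is just the cleanest packaging of Lemma~\ref{approxtosimlem} together with preservation of $\INQML$ under~$\sim$.
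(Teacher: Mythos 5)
Your proposal is correct and follows the paper's own route: the forward direction is exactly Lemma~\ref{approxtosimlem} (with the observation that $s,s'$, in particular singletons, need not be represented in the second sort), and the converse is preservation of $\INQML$ under $\sim$, with your unpacking of the two-phase game and the singleton-state reduction merely making explicit what the paper's two-line proof leaves implicit. No gaps.
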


\begin{proof}
The left-to-right implication is from~\Cref{approxtosimlem};
the converse follows from preservation of $\INQML$ 
under inquisitive bisimulation.
\end{proof}

As further corollaries we obtain characterisation results
for $\INQML$ as a fragment of $\FO$, in the style of 
a van~Benthem theorem. The relevant notions of inquisitive bisimulation 
equivalence can be naturally transferred from models to their
relational encodings as in~\cite{CiardelliOtto}.

\begin{definition}
\label{inqbismdef}
Inquisitive bisimulation equivalence for world-pointed
relational models,  $\Mod,w \sim \Mod',w'$, is defined
by direct transfer from the underlying inquisitive models, as
$\Mod,w \sim \Mod',w' \; :\Leftrightarrow \; 
\M(\Mod), w\sim\M(\Mod'),w'$; and analogously for the 
state-pointed case.
\end{definition}

The following corollary crucially
differs from the
characterisation obtained along very different routes 
in~\cite{CiardelliOttoLong}.
On the one hand, the underlying elementary class of pseudo-models, which
our new characterisation refers to, is considerably wider than the
non-elementary class of relational encodings of proper models. On the
other hand, it therefore significantly involves preservation
assumptions that bridge the gap between pseudo-models and proper models.
Involving the natural extension of the semantics of $\INQML$ to this
wider class of pseudo-models, it also enriches the analysis of the 
relation between $\INQML$ and $\FO$ in another direction.

We first state and sketch the proof for the simpler version of this
characterisation for world properties; the more general characterisation of
state properties, with persistency as an additional 
semantic constraint, is then treated in Corollary~\ref{vBcor_statepointed}.

Compare Definition~\ref{inqclosure} for the notion of \emph{inquisitive closure},
which involves downward closure w.r.t.\ information states in inquisitive assignments,  
in the passage from $\Sigma$ to $\Sigma\!\downarrow$ or 
the analogous passage in the relational encoding of $\Sigma$ in terms of $E$.

\begin{corollary}
\label{vBcor}
The following are equivalent for $\phi= \phi(x) \in \FO$ (in the vocabulary
of two-sorted relational (pseudo-)models and in a single free
variable~$x$ of the first sort):
\bre
\item[(i)]
\bre
\item[--]
$\phi$ and $\neg \phi$ are preserved under passage to inquisitive closures 
over the elementary class of all world-pointed relational pseudo-models, and
\item[--]
$\phi$ is preserved under inquisitive bisimulation $\sim$ over the
(non-ele\-men\-tary) class of all relational encodings of world-pointed models;
\ere
\item[(ii)]
$\phi$ is logically equivalent over the class of all world-pointed relational
pseudo-models to (the standard translation of) some formula $\psi \in \INQML$.
\ere 
\end{corollary}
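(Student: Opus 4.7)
The forward direction, (ii)~$\Rightarrow$~(i), is straightforward from results already in place: preservation of any standard translation $\psi^\ast$ in both directions under passage to the inquisitive closure follows from~\Cref{pseudoclosuresame} via the underlying (pseudo-)model, and $\sim$-invariance over proper models is subsumed by \Cref{EFthm}. So the substance lies in (i)~$\Rightarrow$~(ii).

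For (i)~$\Rightarrow$~(ii), I would introduce the \emph{$\INQML$-consequence set of $\phi$},
\[ T_\phi := \{\psi^\ast(x) \colon \psi \in \INQML,\ \phi \models \psi^\ast \text{ over world-pointed pseudo-models}\}, \]
and aim to prove $T_\phi \models \phi$ over world-pointed pseudo-models. Once this is in hand, $\FO$-compactness over the elementary class of world-pointed relational pseudo-models (cf.~\Cref{pseudoelementary}) delivers finitely many $\psi_1^\ast,\dots,\psi_n^\ast \in T_\phi$ whose conjunction is equivalent to $\phi$; since $^\ast$ commutes with $\wedge$, this yields $\phi \equiv (\psi_1 \wedge \cdots \wedge \psi_n)^\ast$ with $\psi_1 \wedge \cdots \wedge \psi_n \in \INQML$, as required.

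To prove $T_\phi \models \phi$, I would take $\Mod,w \models T_\phi$ and construct $\Mod',w' \models \phi$ with $\Mod,w \eqinqml \Mod',w'$, then pass to $\omega$-saturated elementary extensions $\Mod^\ast$ and $(\Mod')^\ast$. These preserve every standard translation and hence the $\INQML$-type of the distinguished world. The Hennessy--Milner corollary~\Cref{hmclass} then upgrades $\INQML$-equivalence to $\M(\Mod^\ast)\!\downarrow,w \sim \M((\Mod')^\ast)\!\downarrow,w'$. Combining preservation of both $\phi$ and $\neg\phi$ under $\downarrow$-closure with $\sim$-invariance of $\phi$ over proper models transports $\phi$ from $(\Mod')^\ast$ to $(\Mod')^\ast\!\downarrow$, across $\sim$ to $\Mod^\ast\!\downarrow$, back to $\Mod^\ast$ and finally, by elementarity, down to $\Mod,w$.

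The main obstacle, and the genuinely $\INQML$-specific step, is the finite satisfiability over pseudo-models of the combined set
\[ \Sigma := \{\phi(x)\} \cup \{\psi^\ast(x) \colon \Mod,w \models \psi^\ast\} \cup \{\neg\chi^\ast(x) \colon \Mod,w \not\models \chi^\ast\}, \]
needed to realise the $\eqinqml$-companion $\Mod',w'$. The classical van~Benthem argument for $\ML$ handles this via Boolean closure of $\ML$, which $\INQML$ lacks. The rescue is that \emph{at the world level} the standard translation commutes not only with $\wedge$ and $\to$ but also yields $(\alpha \stor \beta)^\ast(x) \equiv \alpha^\ast(x) \vee \beta^\ast(x)$, so $\INQML$ does express all the relevant Boolean combinations of world-translations. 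Concretely, an inconsistent finite fragment $\{\phi,\psi_1^\ast,\dots,\psi_k^\ast,\neg\chi_1^\ast,\dots,\neg\chi_m^\ast\}$ gives, with $\rho := \psi_1 \wedge \cdots \wedge \psi_k$, the entailment $\phi \wedge \rho^\ast \models \chi_1^\ast \vee \cdots \vee \chi_m^\ast$, which by the above identities rewrites to $\phi \models (\rho \to \chi_1 \stor \cdots \stor \chi_m)^\ast$. Thus $(\rho \to \chi_1 \stor \cdots \stor \chi_m)^\ast \in T_\phi$, and evaluating it at $\Mod,w$ (which already satisfies $\rho^\ast$) forces $\Mod,w \models \chi_j^\ast$ for some $j$, contradicting the choice of $\chi_j$. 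This contradiction closes the finite-satisfiability step and thereby the entire characterisation.
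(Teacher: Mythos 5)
Your proof is correct and follows essentially the same route as the paper's: $\FO$-compactness over the elementary class of world-pointed relational pseudo-models, passage to $\omega$-saturated elementary extensions, the Hennessy--Milner class of \Cref{hmclass}, and the two preservation hypotheses in~(i) to carry $\phi$ across inquisitive closures and bisimulation -- the paper merely runs the argument contrapositively, producing $\eqinqml$-equivalent $\omega$-saturated pseudo-models disagreeing on $\phi$, rather than directly via your consequence set $T_\phi$. Your explicit finite-satisfiability step, using that $\rightarrow$ and $\stor$ behave classically at singleton states so the world-level standard translations are effectively closed under Boolean combinations, is precisely the detail the paper compresses into ``by compactness'' and notes becomes genuinely harder only in the state-pointed case of \Cref{vBcor_statepointed}.
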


\begin{proof}
For (i) $\Rightarrow$ (ii) we argue for the contrapositive. If $\phi$ is not
equivalent to any $\INQML$-formula, then by compactness, there are $\omega$-saturated
relational pseudo-models $\Mod,w$ and $\Mod',w'$ such that $\Mod,w \eqinqml \Mod',w'$
but $\Mod,w \models \phi$ while $\Mod',w' \models \neg \phi$. By the previous corollary
$\Mod\!\downarrow,w \sim \Mod'\!\downarrow,w'$, and as $\Mod,w\models\phi$ while
$\Mod',w'\models\neg\phi$, $\phi$ and $\neg \phi$ cannot be preserved under 
$\sim$ and under passage to inquisitive closures as stipulated in~(i).
\end{proof}

\begin{remark}
\label{bisimforpseudorem}
The combined preservation condition in~(i) above can equivalently be
replaced by a single preservation condition w.r.t.\ the natural
extension of inquisitive bisimulation to (relational encodings of) 
pseudo-models. For this one could just define, in extension of 
Definition~\ref{inqbismdef} and, e.g.~for world-pointed pseudo-models,  
$\Mod,w \sim \Mod',w' \; :\Leftrightarrow\;
\M(\Mod)\!\downarrow,w\sim\M(\Mod')\!\downarrow,w'$.
\end{remark}

For the following we adapt the notion of inquisitive closure 
(from Definition~\ref{inqclosure}) to state-pointed relational 
pseudo-models so as to include all subsets of
the distinguished information state in the second sort:\footnote{Representation
of all subsets of the distinguished information state in the second
sort is essential if we want to capture the idea of persistency in the relational setting.}
the inquisitive closure $(\Mod,s)\!\downarrow$ 
augments the second sort of the relational encoding of 
$\M(\Mod)\!\downarrow$ by representations of all subsets of $s$ 
in the second sort in as far as they are not already covered as 
subsets of information states in inquisitive assignments.

In the following, we invoke 
the set of all \emph{boolean combinations of (standard translations)
  of $\INQML$-formulae}, 
meaning the closure of all standard translations of $\INQML$-formulae
under the classical $\FO$-connectives $\neg$, $\wedge$ and $\vee$.

\begin{lemma}
\label{persistentBClem}
Over the class of inquisitive closures of state-pointed relational
pseudo-models with non-empty distinguished states:
if a boolean combination of (standard translations of)
$\INQML$-formulae is persistent,
then it is logically equivalent
to (the standard translation of)  a single $\INQML$-formula.
\end{lemma}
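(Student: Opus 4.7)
The plan is to exploit the $\sim$-invariance that $\phi$, as a Boolean combination of $\INQML$-translations, inherits from \Cref{EFthm}, together with the persistency hypothesis, to exhibit $\phi$ explicitly as a finite intuitionistic disjunction of characteristic formulae at a suitable finite approximation level.

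First, fix the complexity. Write $\phi = B(\phi_1^\ast,\dots,\phi_m^\ast)$ for some Boolean function $B$ and $\phi_i \in \INQML$. The $\phi_i$ collectively mention only finitely many basic propositions, indexed by some finite $I_k \subseteq I$, and their modal nesting depths are bounded by some $n \in \N$. By \Cref{EFthm}, each $\phi_i^\ast$ is $\sim_{n,k}$-invariant on state-pointed inquisitive models, and hence so is the Boolean combination $\phi$. Since there are only finitely many $\sim_{n,k}$-classes of state-pointed inquisitive models over the finite sub-signature $I_k$ (standard bookkeeping, via the inductive construction of characteristic formulae of bounded depth), one can collect the finite set $T = \{t_1,\dots,t_r\}$ of those classes whose representatives in the given class of (closures of) relational pseudo-models satisfy $\phi$; this is well-defined thanks to the invariance just established.

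For each $t \in T$ fix a representative $\Mod_t, s_t$ in the class with $(\M(\Mod_t), s_t)$ of $\sim_{n,k}$-type $t$, and let $\chi_t := \chi_{\M(\Mod_t), s_t}^{n} \in \INQML$ be the characteristic formula supplied by \Cref{EFthm}. Set $\psi := \chi_{t_1} \stor \cdots \stor \chi_{t_r} \in \INQML$ (or $\psi := \bot$ if $T$ is empty, in which case $\phi$ is unsatisfiable over the class). The claim reduces to verifying $\phi \equiv \psi^\ast$ over the given class. The forward direction is immediate: if $\Mod, s$ in the class satisfies $\phi$, then its $\sim_{n,k}$-type $t$ belongs to $T$, and $(\M(\Mod), s) \models \chi_t$ with the trivial subset-witness $s_0 := s$, so $\Mod, s \models \psi^\ast$ by \Cref{fragment}.

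The backward direction is where persistency does real work, and is the main obstacle. Assume $\Mod, s \models \psi^\ast$; by \Cref{fragment} we have $(\M(\Mod), s) \models \chi_t$ for some $t \in T$, so \Cref{EFthm} provides some $s_0 \subseteq s_t$ with $(\M(\Mod), s) \sim_{n,k} (\M(\Mod_t), s_0)$. Since $s$ is non-empty and the $\INQML$-formula $\bot$ of nesting depth zero separates empty from non-empty states, the witness $s_0$ must also be non-empty, hence $(\Mod_t, s_0)$ is again in the class. Persistency of $\phi$ applied to $(\Mod_t, s_t) \models \phi$ with the non-empty subset $s_0 \subseteq s_t$ then yields $(\Mod_t, s_0) \models \phi$, and the $\sim_{n,k}$-invariance from the first paragraph transports this back to $(\Mod, s)$, giving $\Mod, s \models \phi$ as required.
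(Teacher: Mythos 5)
Your argument is correct in substance, but it takes a genuinely different route from the paper. The paper's proof is syntactic and compositional: it encodes persistency as $\phi\equiv\phi^{\downarrow}$ for $\phi^{\downarrow}(\lambda)=\forall\mu(\mu\subset\lambda\rightarrow\phi(\mu))$, observes that $\downarrow$ commutes with conjunction, puts the boolean combination into conjunctive normal form, and then shows that the $\downarrow$-version of each clause $\neg\phi_1^\ast(\lambda)\vee\phi_2^\ast(\lambda)$ is precisely the standard translation of the inquisitive implication $\phi_1\rightarrow\phi_2$ (with the non-emptiness proviso handling the degenerate $\bot/\top$ cases). You instead go through the Ehrenfeucht--Fra\"\i ss\'e machinery of \Cref{EFthm}: bound depth and signature, deduce $\sim_{n,k}$-invariance of $\phi$ via \Cref{fragment}, take $\psi$ to be the $\store$-disjunction of characteristic formulae of the finitely many $\sim_{n,k}$-types realised in the class that satisfy $\phi$, and use persistency exactly to absorb the ``for some $s_0\subset s$'' slack built into what $\chi^n_{\M,s}$ defines. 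This works, and it even produces a normal form at a fixed bisimulation level, making visible where persistency is indispensable; but note what it costs: you need the finiteness of the set of $\sim_{n,k}$-types of state-pointed models, which is true but is imported from the characteristic-formula construction of the cited literature rather than from the statement of \Cref{EFthm} as given here, whereas the paper's argument needs no such finiteness and no finite-signature bookkeeping. Two small points you should tighten: in the forward direction the trivial witness is $s_0:=s_t$ (a subset of the representative's state), not $s_0:=s$; and in the backward direction the justification that the re-pointed representative $(\Mod_t,s_0)$ is again in the class should mention that all subsets of $s_t$ (hence of $s_0$) are represented in the second sort of the closure and that downward closure of $E$ is unaffected by re-pointing, not merely that $s_0\neq\emptyset$.
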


\begin{proof}
For first-order $\psi(\lambda)$ in a free variable $\lambda$ of the
second sort, let 
\[
\psi^\downarrow(\lambda) := 
\forall\mu\bigl(\mu\subset\lambda\rightarrow\psi(\mu)\bigr),
\]
where $\mu \subset \lambda$ stands for the 
natural rendering in our $2$-sorted first-order
framework, which is literally adequate in inquisitive closures of
state-pointed relational pseudo-models. Then $\psi$ is persistent
if, and only if, $\psi \equiv \psi^\downarrow$ over this class. It is
easy to check that $\downarrow$ commutes with conjunction in the sense
that
\[
(\psi_1 \wedge \psi_2)^\downarrow(\lambda) 
\equiv \psi_1^\downarrow(\lambda) 
\wedge 
\psi_2^\downarrow(\lambda).
\]
Consider now a persistent formula $\psi (\lambda)$ that is a boolean
combination of standard translations, w.l.o.g.\ in conjunctive normal
form. As $\downarrow$ commutes with conjunction, it suffices to
show that the $\downarrow$-version of each conjunct
of the conjunctive normal form satisfies the claim of the lemma.
And as the standard translation commutes with conjunction and disjunction
(i.e.\ $\store$ in $\INQML$ vs.\ $\vee$ in $\FO$),
it suffices to prove expressibility of $\psi^\downarrow$ as a standard translation 
of an $\INQML$-formula for a simple disjunction of the form 
\[
\psi(\lambda)= \neg \phi_1^\ast(\lambda) \vee \phi_2^\ast(\lambda)
\]
for $\phi_1,\phi_2 \in \INQML$ with standard translations
$\phi_i^\ast(\lambda)$. We here think of
$\phi_1$ as a (possibly empty) conjunction and of $\phi_2$ as a
(possibly empty) disjunction. The degenerate cases, in which
$\psi$ has trivial positive or negative contributions, correspond to
(classical) $\bot$ or $\top$ in the places of $\phi_2^\ast$ or
$\phi_1^\ast$, respectively. And as classical $\bot$ agrees with
the standard translation of inquisitive $\bot$ only for non-empty
states, it is important that we aim for equivalence w.r.t.\
non-empty states only. But in all those scenarios we find that
$\psi^\downarrow(\lambda)$ is equivalent to the standard translation of the
inquisitive implication $\phi_1 \rightarrow \phi_2 \in \INQML$:
\[
\psi^\downarrow(\lambda)\equiv 
(\phi_1 \rightarrow \phi_2)^\ast(\lambda)
\]
over the class of inquisitive closures of
state-pointed relational pseudo-models with
non-empty distinguished states.
\end{proof}

\begin{corollary}
\label{vBcor_statepointed}
The following are equivalent for $\phi = \phi(\lambda) \in \FO$ (in the vocabulary
of two-sorted relational (pseudo-)models and in a single free
variable~$\lambda$ of the second sort):
\bre
\item
\bae
\item[--]
$\phi$ and $\neg \phi$ are preserved under passage to the inquisitive
closure of state-pointed relational pseudo-models,
\item[--]
$\phi$  is preserved under inquisitive bisimulation $\sim$ over the
(non-ele\-men\-tary) class of all relational encodings of state-pointed models, 
\item[--]
$\phi$ is persistent over the (non-elementary) class of inquisitive
closures of state-pointed relational pseudo-models;
\eae
\item
$\phi$ is logically equivalent to (the standard translation of) some
formula $\psi \in \INQML$ over the class of all relational pseudo-models
with non-empty distinguished information state.
\ere
\end{corollary}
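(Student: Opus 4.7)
The plan is to dispatch (ii) $\Rightarrow$ (i) from earlier results and to derive (i) $\Rightarrow$ (ii) in two stages: first extracting a boolean combination of standard translations equivalent to $\phi$ by a van~Benthem-style compactness argument, and then collapsing that boolean combination via~\Cref{persistentBClem}.

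For (ii) $\Rightarrow$ (i), I would check each of the three preservation clauses in turn. Preservation of $\psi^\ast$ and $\neg \psi^\ast$ under passage to inquisitive closures reduces to~\Cref{pseudoclosuresame} via~\Cref{fragment}; preservation of $\psi^\ast$ under inquisitive bisimulation $\sim$ is part of~\Cref{EFthm} (transferred to the relational side through~\Cref{inqbismdef}); and persistency of $\psi^\ast$ is Observation~\ref{props}(i) read in the relational encoding. Each check is routine.

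For (i) $\Rightarrow$ (ii), let $T$ be the set of all boolean combinations of standard translations of $\INQML$-formulae that are implied by $\phi$ over the elementary class of state-pointed relational pseudo-models with non-empty distinguished state. The key claim is that $T \models \phi$ over this class; once this is established, $\FO$-compactness produces a finite $T_0 \subseteq T$ and a boolean combination $\beta := \bigwedge T_0$ with $\phi \equiv \beta$ on the class. To prove the claim I would argue contrapositively: if some $\Mod',s'$ satisfies $T$ but not $\phi$, then a standard compactness step finds $\Mod,s$ with $\Mod,s \models \phi$ and the same $\INQML$-theory as $\Mod',s'$. Passing to $\omega$-saturated elementary extensions preserves first-order theories (and hence $\phi$, $\neg\phi$, and agreement on all standard translations), while the distinguished states remain in the second sort. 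Then~\Cref{IMLQbisimflatnesslem} upgrades $\INQML$-equivalence to bulk-equivalence of the $\downarrow$-closures, and~\Cref{approxtosimlem}/\Cref{hmclass} upgrades this further to inquisitive bisimilarity of the $\downarrow$-closures. Since $\phi$ and $\neg\phi$ are preserved under $\downarrow$-passage, the $\downarrow$-closures still disagree on $\phi$, contradicting preservation of $\phi$ under $\sim$.

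Having secured $\phi \equiv \beta$, I would use persistency of $\phi$ to infer persistency of $\beta$ on the class of inquisitive closures of state-pointed relational pseudo-models with non-empty state, and then invoke~\Cref{persistentBClem} to replace $\beta$ by a single standard translation $\psi^\ast$ of some $\psi \in \INQML$, with equivalence over inquisitive closures. Since $\phi$ and $\psi^\ast$ are both preserved in both directions under $\downarrow$-passage (the first by hypothesis, the second by~\Cref{pseudoclosuresame}), this equivalence lifts back from inquisitive closures to the full class of relational pseudo-models with non-empty state, as required. I expect the main obstacle to be the bridging step between $\INQML$-equivalence and inquisitive bisimilarity on $\omega$-saturated pseudo-models, which is exactly where~\Cref{IMLQbisimflatnesslem} and~\Cref{approxtosimlem} do the heavy lifting; the plan thus amounts to assembling those ingredients in the classical van~Benthem style together with the persistency collapse.
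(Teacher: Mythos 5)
Your proposal is correct and reproduces the paper's overall skeleton: (ii)~$\Rightarrow$~(i) by routine checks, and (i)~$\Rightarrow$~(ii) in two stages, first showing $\phi$ equivalent over pseudo-models with non-empty distinguished state to a \emph{boolean combination} of standard translations (via $\omega$-saturated elementary extensions, \Cref{IMLQbisimflatnesslem}, \Cref{approxtosimlem}/\Cref{hmclass}, and the closure- and $\sim$-preservation hypotheses), and then collapsing that boolean combination to a single $\INQML$-formula with \Cref{persistentBClem}, which is exactly where persistency enters. Where you genuinely deviate is the compactness bookkeeping of the first stage: you run the classical consequence-set argument (let $T$ be all boolean-combination consequences of $\phi$, show $T \models \phi$ over the class, extract a finite $\beta$), whereas the paper constructs a set $\Psi$ that is complete for standard translations by enumerating them and maintaining the invariant $(\ast)$ that $\phi$ stays inequivalent to every boolean combination modulo $\Psi_i$. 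The two devices are interchangeable here, and both rest on the same observation that at the state level the standard translations are not closed under negation, so one must pass to boolean combinations to pin down $\eqinqml$; your version is arguably the more streamlined one, mirroring how the paper itself handles the world-pointed case in \Cref{vBcor}. Two points you leave implicit, at roughly the paper's own level of terseness: the ``standard compactness step'' has to be run against the full boolean-combination theory of $\Mod',s'$ (positive \emph{and} negated standard translations), so that the model of $T\cup\{\phi\}$ it produces is genuinely $\INQML$-equivalent to $\Mod',s'$ and not merely satisfies its positive $\INQML$-theory; and since $\phi\equiv\beta$ is only available at non-empty states, transferring persistency from $\phi$ to $\beta$ needs a word about the empty substates present in the second sort of an inquisitive closure, which can be fixed harmlessly, e.g.\ by replacing $\beta$ with $\beta\vee\bot^\ast$ before invoking \Cref{persistentBClem}; your final lifting of the resulting equivalence back to all pseudo-models via \Cref{pseudoclosuresame} and \Cref{fragment} is fine.
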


\begin{proof}
As in the proof of~\Cref{vBcor}, 
the implication (ii) $\Rightarrow$ (i) is obvious, and 
we argue indirectly for (i)~$\Rightarrow$ (ii):
we show that, if $\phi$ is \emph{not} equivalent,  
w.r.t.\ non-empty information states,
to any \emph{boolean combination} of $\INQML$-formulae
then it cannot satisfy the preservation and invariance 
conditions in~(i). This then implies~(ii) with~\Cref{persistentBClem}.
The crucial claim for that is the following. 
If $\phi(\lambda)$ is not equivalent to any boolean combination of standard translations
of $\INQML$-formulae, then there is a set $\Psi = \Psi(\lambda)$ 
of first-order formulae in 
the single free variable $\lambda$ of the second sort, such that
\bae
\item
for all standard translations $\psi(\lambda)$ of $\INQML$-formulae, 
either $\psi \in \Psi$ or $\neg \psi \in \Psi$,
\item
$\Psi \cup \{ \phi \}$ and $\Psi \cup \{ \neg \phi \}$ are both
satisfiable
by non-empty information states in
relational pseudo-models.
\eae
Condition~(a) says that $\Psi$ is ``complete'' w.r.t.\
the set of all standard translations of $\INQML$-formulae; (b) guarantees
that there are $\omega$-saturated relational pseudo-models 
$\Mod,s$ and $\Mod',s'$ (models of $\Psi$
with states $s,s'$ that are represented)
such that
$\Mod,s \equiv_{\INQML} \Mod',s'$, whence also 
$(\Mod,s)\!\downarrow,s \equiv_{\INQML} (\Mod',s')\!\downarrow,s'$, 
but $\Mod,s \models \phi$ while
$\Mod',s' \models \neg \phi$. This
contradicts the preservation and invariance
as stated in~(i):
preservation of~$\phi$ and $\neg\phi$ under passage to closures
implies
$(\Mod,s)\!\downarrow,s \models \phi$ and 
$(\Mod',s')\!\downarrow,s' \models \neg \phi$. Moreover,
$\Mod,s \equiv_{\INQML} \Mod',s'$ implies
$(\Mod,s)\!\downarrow,s \sim (\Mod',s')\!\downarrow,s'$
with~\Cref{IMLQbisimflatnesslem} and~\Cref{hmclass};
but this contradicts preservation of $\phi$
under inquisitive bisimulation.

The set $\Psi$ is obtained by a compactness argument, which is
more involved than the analogue used in the world-pointed case of
Corollary~\ref{vBcor}. The present argument crucially involves
the additional persistency assumption in~(i). Assuming w.l.o.g.\ 
a finite signature we may enumerate all
standard translations of $\INQML$-formulae as $(\psi_i(\lambda))_{i
  \in \N}$,
and obtain the desired $\Psi$ as the union of a chain of finite
sets $\Psi_i$ as follows.
Starting from
$\Psi_0 := \emptyset$
we inductively augment 
$\Psi_i$ by either $\psi_i$ or $\neg \psi_i$ to obtain
$\Psi_{i+1}$. In the inductive process we maintain, as an invariant, 
the condition that for all boolean combinations of standard
translations $\psi(\lambda)$ of
$\INQML$-formulae, and over the elementary class of all
state-pointed relational pseudo-models
with a non-empty distinguished information state,\footnote{In the
remainder of this argument we always assume to be
  working over this elementary class: even where this is not stated
  explicitly, all first-order consequence relations are to be read in
  this context.} 
\[
(\ast) \quad
\Psi_i(\lambda) \not\models \phi(\lambda) \leftrightarrow 
\psi(\lambda),
\]
i.e.\ that $\phi$ remains inequivalent to all boolean combinations of
standard translations over the class of all state-pointed relational
pseudo-models of $\Psi_i$ with non-empty distinguished state.
Then $\Psi = \bigcup_i \Psi_i$ meets requirements~(a) and~(b); 
indeed, with compactness, (b)~is covered by~$(\ast)$ for (the standard
translations of) $\bot$ or $\top$ in the r\^ole of $\psi$. If $\Psi \cup \{ \phi \}$
were not satisfiable in a non-empty information state, then 
$\Psi(\lambda)  \models \phi(\lambda) \leftrightarrow \bot$
over the class of relational pseudo-models with non-empty information
states (and over this class, classical $\bot$ is equivalent to the standard
translation of inquisitive $\bot$); similarly for $\Psi \cup \{ \neg \phi \}$
and equivalence with $\top$.
 
The initial set $\Psi_0 = \emptyset$ satisfies
condition~$(\ast)$ by assumption. It remains to argue that~$(\ast)$
can be maintained in at least one of the augmentations
$\Psi_i \cup \{ \psi_i \}$ or $\Psi_i \cup \{ \neg \psi_i \}$.
Assume this were not the case, i.e.\ that, for some boolean combinations $\xi_0,\xi_1$
of standard translations of $\INQML$-formulae,
\[
\Psi_i \cup \{ \psi_i \}
\models \phi \leftrightarrow \xi_0 \quad \mbox{ and } \quad
\Psi_i \cup \{ \neg \psi_i \}
\models \phi \leftrightarrow \xi_1.
\]
But then
\[\Psi_i \models 
\phi \leftrightarrow \bigl((\psi_i \wedge \xi_0)
\vee (\neg \psi_i \wedge \xi_1) \bigr)\]
contradicts the assumption that~$(\ast)$ was satisfied in the previous step.
\end{proof}

\section{Conclusion}
\label{conclusionsec}

Our new standard translation combines several advantages: it is
defined directly and inductively, and it works with minimal 
requirements on the corresponding class of relational models. 
The basic idea of the standard translation and its use in a  
model-theoretic compactness proof requires a non-trivial adaptation of the 
well-known treatment of basic modal logic $\ML$, due to the inherent
two-sortedness of the relational representations of the intended
models for inquisitive modal logic $\INQML$; the notion of graded flatness
plays a key r\^ole in taming the salient second-order features. Other than
for $\ML$, the relational counterparts of the intended models for
$\INQML$ do not form an elementary class. Nevertheless, 
with our standard translation we could give a purely model-theoretic
compactness proof for $\INQML$ over the class of all relational 
inquisitive models, as well as over the class of all relational
pseudo-models. The usefulness of model-theoretic constructions
based on pseudo-models has further been exemplified  with a novel
Hennessy--Milner theorem for $\INQML$, which crucially involves the
passage through pseudo-models and again relies on our standard translation. 
As corollaries we obtained a world-pointed and a state-pointed variant of
a van Benthem style theorem that characterises $\INQML$ in 
relation to $\FO$ over the elementary class of all pseudo-models. 
Overall our findings seem to recommend the class of pseudo-models as a
useful model-theoretic backdrop for the study of inquisitive modal
logic, if not even as a suitable alternative for the class of proper models 
for some specific aspects. Further investigations in this vein could
focus on model-theoretic approaches, via pseudo-models, to special 
classes of inquisitive frames or, for instance, a Lindstr\"om 
characterisation of inquisitive modal logic. 

\paragraph*{Acknowledgement.} We would like to thank the anonymous
referees for suggesting that we substantially extend the
scope of our discussion of inquisitive pseudo-models 
beyond the primary application (viz.~the 
model-theoretic compactness proof of Section~\ref{compactsec}).
This encouraged us to include more recent results involving
saturation and the analysis of bisimulation in the context of 
pseudo-models, as treated in the new Section~\ref{HMsec}.
And that new section, too, profited from the referee's critical reading.

\renewcommand\bibname{References}
\bibliographystyle{abbrv}
\bibliography{ref}

\end{document}